\newtheorem{theorem}{Theorem}
\newcommand{\beq}{\begin{eqnarray}}
\newcommand{\eeq}{\end{eqnarray}}
\newcommand{\R}{\mathbb{R}}
\newcommand{\tr}{\mathrm{tr}}
\newcommand{\A}{\mathcal{A}}
\newcommand{\M}{\mathcal{M}}
\newcommand{\E}{\mathcal{E}}
\newcommand{\N}{\mathcal{N}}
\newcommand{\U}{\mathcal{U}}
\newcommand{\W}{\mathcal{W}}
\newcommand{\V}{\mathcal{V}}
\newtheorem{lemma}[theorem]{Lemma}
\newtheorem{definition}[theorem]{Definition}
\newtheorem{remark}[theorem]{Remark}
\newtheorem{convention}[theorem]{Convention}
\numberwithin{equation}{section}
\numberwithin{theorem}{section}
\begin{document}
\bibliographystyle{amsalpha}
\title{On the uniqueness of asymptotic limits of the Ricci flow}
\author{Antonio G. Ache}
\address{Mathematics Department, Princeton University, Fine Hall, Washington Road, Princeton New Jersey 08544-1000 USA }
\email{aache@math.princeton.edu}
\thanks{Research supported by a Postdoctoral Fellowship of the National Science Foundation, Award No. DMS-1204742}

\begin{abstract}
We consider a normalization of the Ricci flow on a closed Riemannian manifold given by the evolution equation $\partial_{t}g(t)=-2(Ric(g(t))-\frac{1}{2\tau}g(t))$  where $\tau$ is a fixed positive number.  Assuming that a solution for this equation exists for all time, and that the full curvature tensor and the diameter of the manifold are both uniformly bounded along the flow, we prove that up to the action of a 1-parameter family of diffeomorphisms, the flow converges to a unique shrinking gradient Ricci soliton using an idea of Sun and Wang developed in \cite{songwang10} to study the stability of the K\"{a}ler Ricci flow near a K\"{a}hler-Einstein metric. The method in \cite{songwang10} relies on the monotonicity of the Ricci flow along Perelman's $\W$-functional and a {\L}ojasiewicz-Simon inequality for the $\mu$-functional. Our result is an extension of the main theorem in \cite{sesum06}, where convergence to a unique soliton is proved assuming that the solution of the normalized Ricci flow in consideration is sequentially convergent to a soliton which satisfies a certain integrability condition.  

\end{abstract}
\maketitle

\section{Introduction}

Let $(M,g)$ be a closed Riemannian manifold, let $\tau>0$ be a fixed number and  let $g(t)$ be a solution of the normalized Ricci flow
\begin{align}\label{tauflow}
\left\{
\begin{array}{ll}
\partial_{t}g_{ij}&=-2(R_{ij}-\frac{1}{2\tau}g_{ij}),\\
g_{0}&=g.
\end{array}
\right.
\end{align}  
Note that equation \eqref{tauflow} is equivalent after a rescaling and a reparametrization to the Ricci flow. Assume in addition the following
\begin{enumerate}
\item\label{exists} The flow exists for all time $t\ge 0$,
\item\label{Rmbounded} $\|Rm\|_{C^{0}(M)}$ is uniformly bounded along the flow,
\item\label{diameter} The diameter of $(M,g(t))$ (denoted by $\mathrm{diam}(M,g(t))$) is uniformly bounded in $t\ge 0$.
\end{enumerate}

It was proved in \cite{sesum06} that under assumptions \eqref{exists}, \eqref{Rmbounded} and \eqref{diameter},  solutions of \eqref{tauflow} are \emph{sequentially convergent} to compact shrinking gradient Ricci solitons, i.e., for every sequence $\{t_{i}\}$ of times with $t_{i}\nearrow\infty$, there exists a subsequence $\{t^{'}_{i}\}$ such that $g(t^{'}_{i})$ converges to a compact shrinking gradient Ricci soliton $g_{0}$ in $C^{k,\gamma}$ for any $k> 1$. More precisely, when we say that $g_{0}$ is a shrinking gradient Ricci soliton, we mean that it satisfies
\begin{align}\label{solitoneq1}
Ric(g_{0})+\nabla^{2}f_{0}-\frac{1}{2\tau}g_{0}=0,
 \end{align}
for some smooth function $f_{0}$. In this case we say that $g_{0}$ is a \emph{sequential limit} of \eqref{tauflow}. An important question derived from the above mentioned result is whether the Ricci flow converges (at least up to the action of a 1-parameter family of diffeomorphisms) to a soliton at a \emph{definite rate} and not just sequentially. This question was  answered in the affirmative in \cite{sesum06} assuming that one of the sequential limits $g_{0}$ satisfies a certain \emph{integrability} condition. In the interest of brevity, we will only define this integrability condition in the Einstein case


\begin{definition}[Integrability in the Einstein case]\label{defintegrability} Let $g_{0}$ satisfy $Ric(g_{0})=\frac{1}{2\tau}g_{0}$. Let $\{g_{s}\}$ be a path of metrics with $g_{s}\big{|}_{s=0}=g_{0}$ and $\frac{d}{ds}g_{s}\big{|}_{s=0}=h$. We say that $g_{0}$ is integrable if every solution $h$ of the linearized deformation equation
\begin{align*}
\frac{d}{ds}(Ric(g_{s})-\frac{1}{2\tau}g_{s})\big{|}_{s=0}=0,
\end{align*}
can be obtained from a path of metrics $g_{s}$ satisfying 
\begin{align*}
Ric(g_{s})-\frac{1}{2\tau}g_{s}=0.
\end{align*}
\end{definition} 
In the Einstein case, the integrability condition means roughly speaking that the set of Einstein metrics has a Banach manifold structure near the metric $g_{0}$. The main result proved in \cite{sesum06} is the following
\begin{theorem}[\cite{sesum06}]\label{thmsesum}  Let $(M,g)$ be a compact Riemannian manifold of dimension $n\ge 3$ and let $g(t)$ be a solution of the normalized Ricci flow \eqref{tauflow} starting at $g$ which exists for all $t\ge 0$. Assume also that both $\|Rm(g(t))\|_{C^{0}(M)}$ and $\mathrm{diam}(M,g(t))$ are uniformly bounded along the flow. If one sequential limit of $g(t)$ is integrable,  then there exists $t_{0}>0$, a 1-parameter family of diffeomorphisms $\phi_{t}$ and a unique compact shrinking gradient Ricci soliton $g_{0}$ (unique up to diffeomorphisms) such that one has
\begin{align*}
\|\phi_{t}^{*}g(t)-g_{0}\|_{C^{k,\gamma}_{g(t)}}\le Ce^{-\delta t},
\end{align*} 
for some positive constants $\delta$ and $C$ and for any $k\ge 1$ and $0<\gamma<1$. 
\end{theorem}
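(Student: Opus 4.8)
The plan is to realize the normalized flow \eqref{tauflow}, after fixing a gauge, as a gradient-type flow for Perelman's $\mu$-functional at fixed $\tau$, to exploit that the limit soliton $g_0$ is a critical point of this functional, and to upgrade the sequential convergence already supplied by \cite{sesum06} to convergence at a definite exponential rate by means of a {\L}ojasiewicz--Simon inequality whose optimal exponent is forced by the integrability hypothesis. First I would invoke Perelman's monotonicity: for $\mu(g,\tau)=\inf\{\W(g,f,\tau):\int(4\pi\tau)^{-n/2}e^{-f}\,dV=1\}$, the quantity $\mu(g(t),\tau)$ is nondecreasing along \eqref{tauflow}, is stationary exactly at the shrinking gradient solitons \eqref{solitoneq1}, and its $L^{2}$-gradient is a positive multiple of $Ric(g)+\nabla^{2}f_{g}-\frac{1}{2\tau}g$, where $f_{g}$ is the minimizer determined from $g$ by the nonlinear elliptic equation defining the infimum. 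Sequential convergence then gives $\mu(g(t),\tau)\nearrow\mu_{\infty}=\mu(g_{0},\tau)$, and that along some $t_{i}\to\infty$ suitable pullbacks of $g(t_{i})$ converge in $C^{k,\gamma}$ to $g_{0}$; in particular the flow eventually enters an arbitrarily small $C^{k,\gamma}$-neighborhood of the diffeomorphism orbit of $g_{0}$.

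Next I would fix the diffeomorphism gauge. Since $\mu$ is $\mathrm{Diff}(M)$-invariant, the tangent space to the orbit of $g_{0}$ always lies in the kernel of the second variation, the stability operator $L$ at $g_{0}$, which is elliptic and self-adjoint with respect to $e^{-f_{0}}\,dV_{g_{0}}$ and has discrete spectrum. Using a DeTurck/Bianchi slice I would restrict to a transverse slice $\s$ through $g_{0}$ and replace \eqref{tauflow} by the gauge-fixed flow $\tilde g(t)=\phi_{t}^{*}g(t)$ obtained by subtracting the Lie derivative along the DeTurck vector field; this is strictly parabolic and its stationary points in $\s$ are precisely the solitons near $g_{0}$. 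The integrability hypothesis (the soliton analogue of Definition \ref{defintegrability}) says exactly that $\ker L$ within $\s$ is integrable, i.e. it is the tangent space of a finite-dimensional smooth manifold $\mathcal{Z}$ of solitons through $g_{0}$; equivalently, the critical set of $\mu|_{\s}$ near $g_{0}$ is the nondegenerate (Morse--Bott) manifold $\mathcal{Z}$.

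The analytic heart is a {\L}ojasiewicz--Simon inequality for $\mu|_{\s}$ at $g_{0}$: there are $C>0$, $\sigma>0$, $\theta\in(0,\tfrac12]$ with $|\mu(g,\tau)-\mu_{\infty}|^{1-\theta}\le C\,\|Ric(g)+\nabla^{2}f_{g}-\tfrac{1}{2\tau}g\|_{L^{2}}$ for $g\in\s$ within distance $\sigma$ of $g_{0}$. Because $\mathcal{Z}$ is Morse--Bott — this is where integrability enters, via a Lyapunov--Schmidt reduction whose finite-dimensional reduced functional is constant along $\mathcal{Z}$ — the optimal exponent is $\theta=\tfrac12$. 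Combining this with the monotonicity $\frac{d}{dt}\mu=c\,\|Ric+\nabla^{2}f-\tfrac{1}{2\tau}g\|_{L^{2}}^{2}$ gives $-\frac{d}{dt}(\mu_{\infty}-\mu)^{\theta}\ge c'\,\|\partial_{t}\tilde g\|_{L^{2}}$, so $\int_{t_{0}}^{\infty}\|\partial_{t}\tilde g\|_{L^{2}}\,dt\le C(\mu_{\infty}-\mu(t_{0}))^{\theta}<\infty$. Finite $L^{2}$-length forces $\tilde g(t)$ to converge (not merely sequentially) to a single $g_{0}\in\mathcal{Z}$, and with $\theta=\tfrac12$ the inequality integrates to $\mu_{\infty}-\mu(t)\le Ce^{-\delta t}$, hence $\|\tilde g(t)-g_{0}\|_{L^{2}}\le Ce^{-\delta t}$. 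I would then bootstrap this $L^{2}$-decay to the stated $C^{k,\gamma}_{g(t)}$-decay using the uniform curvature bound \eqref{Rmbounded}, the diameter bound \eqref{diameter} (which rules out collapse and loss of regularity), and interior parabolic Schauder estimates for the gauge-fixed flow. Undoing the gauge yields the $1$-parameter family $\phi_{t}$ with $\|\phi_{t}^{*}g(t)-g_{0}\|_{C^{k,\gamma}_{g(t)}}\le Ce^{-\delta t}$; uniqueness up to diffeomorphism follows since any other sequential limit shares the value $\mu_{\infty}$ and must coincide with the now-attained single limit.

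I expect the main obstacle to be establishing the {\L}ojasiewicz--Simon inequality with the correct Morse--Bott exponent in the presence of both the diffeomorphism gauge group and the nonlocal dependence of the minimizer $f_{g}$ on $g$. Controlling the $f_{g}$-dependence — so that the gradient of $\mu$ is genuinely the soliton quantity $Ric+\nabla^{2}f_{g}-\frac{1}{2\tau}g$ and the second variation is Fredholm on $\s$ — and verifying that integrability is exactly what promotes the generic exponent $\theta<\tfrac12$ to $\theta=\tfrac12$, are the delicate points. By comparison, the regularity bootstrap and the finite-length convergence argument are routine given the uniform geometric bounds \eqref{Rmbounded} and \eqref{diameter}.
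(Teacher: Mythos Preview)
The paper does not prove Theorem~\ref{thmsesum}; it is quoted as the main result of \cite{sesum06}. According to the discussion in the introduction, Sesum's original argument does \emph{not} proceed via a {\L}ojasiewicz--Simon inequality for $\mu$: it is modeled on the Cheeger--Tian argument \cite{ct} for uniqueness of tangent cones, where the integrability hypothesis is used directly (in lieu of a gradient inequality) to produce the exponential rate. So there is no ``paper's own proof'' to compare against, and your proposal is genuinely different from the proof being cited.

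That said, your route is a natural and essentially correct alternative, and it is precisely the bridge between \cite{sesum06} and the present paper. You are running the machinery the author builds for Theorem~\ref{mainthm} --- the modified Ricci flow as gradient flow of $\mu$, the gauge-fixing of Lemma~\ref{gaugefixing}, the {\L}ojasiewicz--Simon inequality of Lemma~\ref{lemmaloja}, and the stability estimate of Lemma~\ref{stability} --- and adding the one extra observation that integrability makes the critical set of $\mu$ on the slice a Morse--Bott manifold, so the Lyapunov--Schmidt reduced functional is constant and the {\L}ojasiewicz exponent can be taken to be $\tfrac12$ (in the paper's notation, $\alpha=1$ in \eqref{loja}). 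With exponent $\tfrac12$ the differential inequality for $\mu(g_{0},\tau)-\mu(\tilde g(t),\tau)$ integrates to exponential decay rather than the polynomial decay obtained at the end of Section~\ref{uniqueness}, and the interpolation/bootstrap of Lemma~\ref{stability} then gives exponential $C^{k,\gamma}$ convergence. What your approach buys over \cite{sesum06} is a unified picture: Theorem~\ref{thmsesum} becomes the special case of Theorem~\ref{mainthm} in which the exponent is optimal. What Sesum's original approach buys is that it predates, and does not require, the regularity theory for the minimizer map $g\mapsto P(g)$ (Lemma~\ref{lemregular}) and the {\L}ojasiewicz--Simon inequality for $\mu$ developed in \cite{songwang10}; your argument leans on exactly those ingredients, which you correctly flag as the delicate points.
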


In this paper we prove that the integrability condition in Theorem \ref{thmsesum} is \emph{not} needed  to obtain convergence of solutions of \eqref{tauflow} to a unique soliton. However,  our proof only allows us to conclude that the convergence of $g(t)$ to a soliton is in general only polynomial in time and not exponential in time as in Theorem \ref{thmsesum}. Our main result is the following

\begin{theorem}\label{mainthm} Let $(M,g)$ be a compact Riemannian manifold of dimension $n\ge 3$ and suppose that \eqref{tauflow} has a solution  $g(t)$ which exists for all $t\ge 0$. Assume also that both $\|Rm(g(t))\|_{C^{0}(M)}$ and $\mathrm{diam}(M,g(t))$ are uniformly bounded along the flow. Then, there exists a compact  shrinking gradient Ricci soliton $g_{0}$, a number $T>0$ and a 1-parameter family of diffeomorphisms $\{\phi_{t}\}$ such that for $t\ge T$   
\begin{align*}
\|\phi_{t}^{*}g(t)-g_{0}\|_{C^{k,\gamma}_{g(t)}(M)}\le C(t-T)^{-\beta},
\end{align*}
where $C,\beta$ are positive constants independent of $t$.  Moreover, the soliton $g_{0}$ is unique up to diffeomorphisms. 
\end{theorem}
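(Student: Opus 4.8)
The plan is to fix a diffeomorphism gauge so that \eqref{tauflow} becomes the gradient flow of Perelman's $\mu$-functional at the fixed parameter $\tau$, whose critical points are exactly the shrinking gradient Ricci solitons \eqref{solitoneq1}, and then to run the {\L}ojasiewicz-Simon scheme of \cite{songwang10}. First I would note that for every $g$ in a $C^{k,\gamma}$-neighborhood of a soliton the minimizer $f=f_{g}$ realizing $\mu(g,\tau)=\inf_{f}\W(g,f,\tau)$ (subject to $\int_{M}(4\pi\tau)^{-n/2}e^{-f}\,dV=1$) exists, is unique, and depends analytically on $g$, via the implicit function theorem applied to the elliptic Euler-Lagrange equation for $f$; thus $\mu(\cdot,\tau)$ is a well-defined analytic functional of $g$ alone. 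Setting $E(g)=Ric(g)+\nabla^{2}f_{g}-\tfrac{1}{2\tau}g$ and letting $\phi_{t}$ be the family of diffeomorphisms generated by $-\nabla f_{t}$ (the DeTurck-type correction), the pulled-back metrics $\hat{g}(t)=\phi_{t}^{*}g(t)$ satisfy $\partial_{t}\hat{g}=-2E(\hat{g})$, which is precisely the gradient flow of $\mu(\cdot,\tau)$ with respect to the weighted $L^{2}$ inner product attached to $dm_{t}=(4\pi\tau)^{-n/2}e^{-f_{t}}\,dV$; these $\phi_{t}$ are the diffeomorphisms appearing in the statement. The monotonicity formula then reads $\frac{d}{dt}\mu(\hat{g}(t),\tau)=c_{\tau}\|E(\hat{g}(t))\|_{L^{2}(dm_{t})}^{2}\ge 0$ for a positive constant $c_{\tau}$, so $\mu(\hat{g}(t),\tau)$ is nondecreasing and, being bounded above under \eqref{Rmbounded} and \eqref{diameter} as in \cite{sesum06}, converges to a finite limit $\mu_{\infty}$.

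Next I would invoke the sequential convergence of \cite{sesum06}, valid under exactly \eqref{exists}, \eqref{Rmbounded} and \eqref{diameter}, to obtain times $t_{i}\nearrow\infty$ and a shrinking soliton $g_{0}$ with $\mu(g_{0},\tau)=\mu_{\infty}$ such that $\hat{g}(t_{i})\to g_{0}$ in $C^{k,\gamma}$. The heart of the proof is then a {\L}ojasiewicz-Simon inequality for $\mu(\cdot,\tau)$ near $g_{0}$: there should exist $C>0$, a $C^{k,\gamma}$-neighborhood $\U$ of $g_{0}$, and an exponent $\theta\in(0,\tfrac12]$ with $|\mu(g,\tau)-\mu_{\infty}|^{\,1-\theta}\le C\,\|E(g)\|_{L^{2}(dm)}$ for all $g\in\U$. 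To establish it I would linearize $E$ at $g_{0}$, recognize the linearization as the self-adjoint weighted stability operator whose kernel is the space of infinitesimal soliton deformations, and apply the abstract infinite-dimensional {\L}ojasiewicz-Simon theorem; the two hypotheses to check are the analyticity of $g\mapsto\mu(g,\tau)$ (inherited from that of $\W$ and of $g\mapsto f_{g}$) and the Fredholm property of the stability operator.

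The decisive obstacle is that $\mu(\cdot,\tau)$ is invariant under the full diffeomorphism group, so $E$ vanishes along every orbit direction and its linearization has an infinite-dimensional kernel, which destroys the Fredholm property required above. To remedy this I would restrict to a slice transverse to the diffeomorphism orbit through $g_{0}$, equivalently impose a Bianchi-type gauge with respect to $dm$, on which the linearized operator is elliptic modulo lower-order terms and hence Fredholm; this gauge reduction is the mechanism of \cite{songwang10}. With the inequality secured, I would run the standard energy argument: writing $\xi(t)=\mu_{\infty}-\mu(\hat{g}(t),\tau)\ge 0$ one has $-\dot{\xi}=c_{\tau}\|E\|_{L^{2}(dm_{t})}^{2}\ge c\,\xi^{\,2(1-\theta)}$, and differentiating $\xi^{\theta}$ yields $\int_{T}^{\infty}\|E(\hat{g}(t))\|_{L^{2}(dm_{t})}\,dt\le C'\xi(T)^{\theta}<\infty$. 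Finite length of the gauge-fixed trajectory upgrades the sequential limit to convergence of the whole flow to the single metric $g_{0}$, giving uniqueness up to diffeomorphism, while integrating $-\dot{\xi}\ge c\,\xi^{2(1-\theta)}$ gives $\xi(t)\le C(t-T)^{-1/(1-2\theta)}$. Interpolating the resulting $L^{2}$ decay against the uniform higher-order bounds furnished by \eqref{Rmbounded} and Shi-type estimates transfers the rate to the $C^{k,\gamma}$ norm, producing $\|\phi_{t}^{*}g(t)-g_{0}\|_{C^{k,\gamma}_{g(t)}(M)}\le C(t-T)^{-\beta}$ with $\beta=\theta/(1-2\theta)$; the exponential rate of Theorem \ref{thmsesum} is exactly the integrable case $\theta=\tfrac12$, which we do not assume.
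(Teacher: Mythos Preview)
Your proposal is correct and follows essentially the same route as the paper: sequential compactness from \cite{sesum06} to land near a soliton $g_{0}$, unique analytic dependence of the minimizer $f_{g}$ so that $\mu(\cdot,\tau)$ is an analytic functional whose gradient flow is the diffeomorphism-corrected (``modified'') Ricci flow, a {\L}ojasiewicz--Simon inequality for $\mu$ near $g_{0}$ obtained after passing to a slice where the second variation is elliptic, and the standard finite-length/decay argument with interpolation against Shi-type bounds to upgrade $L^{2}$ to $C^{k,\gamma}$.

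The one point the paper treats carefully and you leave implicit is the \emph{domain of definition} of the modified flow and the accompanying trapping argument. The minimizer $f_{g}$ is only known to be unique and analytic for $g$ in a regular neighborhood $\U$ of $g_{0}$, so you cannot define $\hat g(t)=\phi_{t}^{*}g(t)$ for all $t\ge 0$ from the outset, nor assert that $\hat g(t_{i})\to g_{0}$ directly from \cite{sesum06}. Instead one must pick $T$ large with $g(T)\in\U$ (from sequential convergence), start the modified flow there, and then run an open--closed argument: as long as $\hat g(s)\in\U$ your finite-length bound $\int\|E\|\le C(\mu(g_{0})-\mu(\hat g(T)))^{\theta}$ keeps $\hat g(s)$ within $\tfrac{\delta}{2}$ of $g_{0}$, and a short-time extension criterion (uniform $C^{l}$ bounds on $f_{t}$ from the Euler--Lagrange equation) lets you push past any putative exit time. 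This is the content of the paper's Lemmas \ref{existence}, \ref{canonicallemma}, \ref{criterion} and \ref{stability}; once it is in place, your decay computation and the identification of the limit with $g_{0}$ up to diffeomorphism go through exactly as you wrote (with the minor caveat that the final $C^{k,\gamma}$ exponent $\beta$ picks up an interpolation factor $\eta<1$, so $\beta=\eta\theta/(1-2\theta)$ rather than $\theta/(1-2\theta)$).
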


For the proof of Theorem \ref{mainthm} we use a method developed in \cite{songwang10} which relies on the observation that the $\W$-functional introduced by Perelman in \cite{perelman1} is nondecreasing along the Ricci flow and that the $\mu$ functional (also introduced in \cite{perelman1}) satisfies a {\L}ojasiewicz-Simon type inequality for metrics which are sufficiently close to a shrinking gradient Ricci soliton. The method in \cite{songwang10} is at the same time based on ideas developed in \cite{ls83} where several unique asymptotic limit theorems for a class of nonlinear evolution equations are proved and where the {\L}ojasiewicz-Simon inequality was first derived. On the other hand, the proof of the uniqueness part in Theorem \ref{thmsesum} is based on an argument used in \cite{ct} to prove uniqueness of tangent cones at infinity for complete Ricci flat manifolds with Euclidean volume growth and quadratic curvature decay at infinity. In the same way that it is assumed in Theorem \ref{thmsesum} that one of the limiting solitons is integrable, the main uniqueness result in \cite{ct} assumes that one Ricci-flat tangent cone satisfies a certain integrability condition. Even though the results in \cite{ct} were to a certain extent also inspired by those in \cite{ls83}, no {\L}ojasiewicz-Simon inequality was known for the problem studied in \cite{ct} and hence the need of an integrability condition. In the next section we will make clear what we mean by {\L}ojasiewicz-Simon inequality and we will give a brief explanation of its use in our proof.

\subsection{Outline of the proof of Theorem \ref{mainthm}}\label{outline}  In \cite{perelman1}, it was shown that the Ricci flow (and consequently a flow like \eqref{tauflow}) can be thought of as the gradient flow of a functional called the $\W$-\emph{functional} whose critical points are precisely the shrinking gradient Ricci solitons.  The $\W$-functional is defined explicitly as
\begin{align}\label{Wfunctional}
\W(g,f,\tau)=\int_{M}\left[\tau\left(|\nabla f|^{2}+R_{g}\right)+(f-n)\right](4\pi\tau)^{-n/2}e^{-f}dV_{g},
\end{align} 
where $f$ is a smooth function defined on $M$ and $\tau>0$ is a real number. An important fact about $\W$ is that  it provides us with a \emph{monotonicity formula} along the Ricci flow. More precisely, if we consider the coupled system
\begin{align*}
\partial_{t}g(t)&=-2(Ric(g(t))+\nabla^{2}f(t)-\frac{1}{2\tau}g),\\
\partial_{t}f_{t}&=-\Delta f_{t}-R_{g(t)}-\frac{n}{2\tau},
\end{align*}
while keeping $\tau$ fixed we obtain
\begin{align}\label{Wmonotonicity}
\partial_{t}\W(g(t),f_{t},\tau)=2\tau\int_{M}\left|Ric(g(t))-\nabla^{2}f_{t}-\frac{1}{2\tau}g(t)\right|_{g(t)}^{2}(4\pi\tau)^{-n/2}e^{-f_{t}}dV_{g(t)}.
\end{align}
From equation \eqref{Wmonotonicity} we conclude that if $\partial_{t}\W(g(t),f_{t},\tau)=0$, then $g(t)$ is a shrinking gradient Ricci soliton. In fact, if we fix $\tau$ and let $(g_{0},f_{0})$ satisfy \eqref{solitoneq1} then 
$(f_{0},g_{0})$ is a critical point of the map $(g,f)\rightarrow\W(g,f,\tau)$. If we let $\{\psi_{t}\}_{t\ge 0}$ be the flow generated by the time-dependent vector field $-\nabla f_{t}$  then the metric $\psi_{t}^{*}g(t)$ is a solution of the normalized Ricci flow \eqref{tauflow}. With this in mind, we would like to reduce the proof of Theorem \ref{mainthm} to the proof of the following result in the theory of ordinary differential equations
\begin{theorem}[See \cite{lsimonbk2}]\label{proto} Let $\xi(t)\in\R^{n}$ be a $C^{1}$ solution in $t\in[0,\infty)$ of the equation 
\begin{align}\label{ODE}
\dot{\xi}(t)=-\nabla f(\xi(t)), 
\end{align}
where $f:\R^{n}\rightarrow\R$ is real analytic. Then $\xi_{0}=\lim_{t\rightarrow\infty}\xi(t)$ exists and for some $\beta>0$ we have
\begin{align}\label{definiterate}
|\xi(t)-\xi_{0}|=O(t^{-\beta}),~t\rightarrow\infty.
\end{align}
Furthermore, $\xi_{0}$ is a critical point of $f$. 
 \end{theorem}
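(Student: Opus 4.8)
The plan is to treat \eqref{ODE} as the finite-dimensional model of the {\L}ojasiewicz--Simon mechanism and to run {\L}ojasiewicz's classical argument. Throughout I assume, as one must for the limit to exist, that the orbit $\{\xi(t):t\ge 0\}$ is relatively compact in $\R^{n}$; this cannot be dropped (for $f(x)=e^{-x}$ the flow escapes to $+\infty$), and in the genuine application this role is played by the curvature and diameter bounds, which guarantee subconvergence. First I would record the monotonicity built into a gradient flow: along \eqref{ODE} one has $\frac{d}{dt}f(\xi(t))=\nabla f(\xi(t))\cdot\dot\xi(t)=-|\nabla f(\xi(t))|^{2}\le 0$, so $f(\xi(t))$ is nonincreasing and, being bounded below on the compact closure of the orbit, converges to some $f_{\infty}$. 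Integrating this identity gives the energy bound $\int_{0}^{\infty}|\dot\xi(t)|^{2}\,dt=f(\xi(0))-f_{\infty}<\infty$. In particular there is a sequence $t_{i}\to\infty$ with $|\nabla f(\xi(t_{i}))|\to 0$; passing to a subsequence, $\xi(t_{i})\to\xi_{0}$ with $\nabla f(\xi_{0})=0$ and $f(\xi_{0})=f_{\infty}$. This $\xi_{0}$ is the candidate limit and is automatically a critical point, so the content of the theorem is to upgrade subconvergence along $t_{i}$ to genuine convergence with a rate.

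The key analytic input, and the main point, is the \emph{{\L}ojasiewicz gradient inequality} for the real-analytic $f$ near $\xi_{0}$: there exist a neighborhood $U$ of $\xi_{0}$, a constant $C>0$ and an exponent $\theta\in(0,\tfrac12]$ with
\begin{align*}
|f(x)-f_{\infty}|^{1-\theta}\le C\,|\nabla f(x)|\qquad\text{for all }x\in U.
\end{align*}
After subtracting a constant I normalize $f_{\infty}=0$. This is precisely the finite-dimensional prototype of the {\L}ojasiewicz--Simon inequality that will later be established for Perelman's $\mu$-functional, and its proof (via the structure theory of real-analytic varieties) is where real-analyticity is indispensable. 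I expect this to be the main obstacle conceptually, even though in the ODE setting it may simply be quoted.

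With the inequality in hand I would run the trapping-and-length argument. On the portion of the orbit lying in $U$ with $f(\xi(t))>0$, compute
\begin{align*}
-\frac{d}{dt}\big(f(\xi(t))^{\theta}\big)
&=\theta\,f(\xi(t))^{\theta-1}|\nabla f(\xi(t))|^{2}\\
&\ge \frac{\theta}{C}\,|\nabla f(\xi(t))|=\frac{\theta}{C}\,|\dot\xi(t)|,
\end{align*}
where the inequality uses $f^{\theta-1}=f^{-(1-\theta)}\ge (C|\nabla f|)^{-1}$. Hence $|\dot\xi(t)|\le -\frac{C}{\theta}\frac{d}{dt}f(\xi(t))^{\theta}$, and integrating bounds the arc length of the orbit over any subinterval in $U$ by $\frac{C}{\theta}f^{\theta}$ evaluated at the left endpoint. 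A continuation argument then shows that once $\xi(t)$ enters a sufficiently small ball about $\xi_{0}$ (which it does along $t_{i}$, since $f(\xi(t_{i}))^{\theta}\to 0$), this finite-length bound prevents it from ever leaving $U$; the orbit therefore has finite total length, converges to a limit, and that limit must coincide with $\xi_{0}$. (If $f(\xi(t))$ vanishes at a finite time, then $\nabla f$ vanishes there and the orbit is constant thereafter.)

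Finally, for the rate I would feed the inequality back into the energy identity. Setting $y(t)=f(\xi(t))\ge 0$, the inequality gives $|\nabla f|\ge C^{-1}y^{1-\theta}$, whence $\dot y=-|\nabla f|^{2}\le -C^{-2}y^{2(1-\theta)}$. When $\theta<\tfrac12$ the exponent $2(1-\theta)>1$, and comparison with $\dot z=-C^{-2}z^{2(1-\theta)}$ yields $y(t)=O\big(t^{-1/(1-2\theta)}\big)$ (for $\theta=\tfrac12$ one gets exponential decay). Combining with the tail form of the length estimate,
\begin{align*}
|\xi(t)-\xi_{0}|\le\int_{t}^{\infty}|\dot\xi(s)|\,ds\le\frac{C}{\theta}\,y(t)^{\theta}=O\big(t^{-\beta}\big),\qquad \beta=\frac{\theta}{1-2\theta},
\end{align*}
which establishes \eqref{definiterate}. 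The transfer of this scheme from the finite-dimensional $f$ to Perelman's $\mu$-functional, where the gradient $\nabla f$ and the real-analytic {\L}ojasiewicz inequality are replaced respectively by the gradient of $\mu$ and a {\L}ojasiewicz--Simon inequality on a slice transverse to the action of the diffeomorphism group, is exactly the work carried out in the remainder of the paper.
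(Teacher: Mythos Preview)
Your argument is correct and is exactly the classical {\L}ojasiewicz trapping-and-length argument. The paper does not supply its own proof of Theorem~\ref{proto}; it is quoted from \cite{lsimonbk2} as motivation, and the paper's proof of its main Theorem~\ref{mainthm} (via Lemma~\ref{stability} and the endgame in Section~\ref{uniqueness}) is precisely the infinite-dimensional transcription of the steps you outline: the length/stability estimate from the differential inequality $-\frac{d}{dt}(\text{energy gap})^{\theta}\ge c\,|\dot\xi|$, the continuation (trapping) argument, and the ODE comparison $\dot y\le -c\,y^{2(1-\theta)}$ to extract the polynomial rate. Your observation that the statement needs a relative-compactness hypothesis on the orbit (your $e^{-x}$ example) is well taken; in the paper's application this role is played by the uniform curvature and diameter bounds, which are exactly what produces subconvergence to the soliton $g_{0}$ in \cite{sesum06}.
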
 
Note that solutions of  equation \eqref{ODE} satisfy an obvious monotonicity property, namely that $f(\xi(t))$ is non-increasing in $t$ since $\xi(t)$ is evolving in the direction of ``steepest descent" for $f$. The reason why the difference $\xi(t)-\xi_{0}$ in \eqref{definiterate} decays at a definite rate as $t$ tends to infinity is because in this case the function $f$ satisfies the \emph{{\L}ojasiewicz inequality}, a result which states that if $f:\R^{n}\rightarrow\R$ is real analytic and $y$ is a critical point of $f$  then there exists a number $\alpha\in (0,1)$ such that for $x$ sufficiently close to $y$ we have
\begin{align}\label{finiteloja}
|f(x)-f(y)|^{1-\frac{\alpha}{2}}\le C|\nabla f(x)|,
\end{align}   
where $C$ is a positive constant independent of $x$. Even though the classical {\L}ojasiewicz inequality was proved for functions in $\R^{n}$, it was shown by Leon Simon in \cite{ls83} that a similar inequality holds in several infinite-dimensional settings. In many of the variational problems studied by Simon, he considered  functionals which where strictly convex and real analytic. The general formulation of the infinite-dimensional {\L}ojasiewicz inequality in \cite{ls83} is as follows: let $\E$ be a functional defined by
\begin{align}\label{generalfunctional}
\E(u)=\int_{M}E(x,u(x),\nabla u(x))d\mathrm{vol},
\end{align}
where the function $E(x,y,z)$ is real analytic in $(x,y)$. Let $\nabla\E(u)$ be the gradient or differential of $\E$ at $u$ and assume that $u_{0}$ is a critical point of $\E$, i.e., $\nabla\E(u_{0})=0$. If the second variation of $\E$ at $u_{0}$ is elliptic (which holds under suitable convexity assumptions on the integrand $E$), then there exists a number $\alpha\in (0,1)$ such that  for $u$ sufficiently close to $u_{0}$ in $C^{2}(M)$ we have
\begin{align}\label{infiniteloja}
|\E(u)-\E(u_{0})|^{1-\frac{\alpha}{2}}\le C\|\nabla\E(u)\|_{L^{2}(M)},
\end{align}  
where $C$ is a positive constant that does not depend on $u$. Inequality \eqref{infiniteloja} is sometimes called \emph{{\L}ojasiewicz-Simon inequality} and its proof is based on a method called the \emph{Lyapunov-Schmidt reduction} which allows us to reduce the infinite dimensional case to the classical {\L}ojasiewicz inequality \eqref{finiteloja}. Note however, that in principle the $\W$ functional does not fit into this framework because it  is invariant under diffeomorphisms that act simultaneously on $g$ and $f$ and this prevents the functional from being strictly convex. Even if we were able to deal with this difficulty, for example by means of a gauge-fixing approach, the analysis of the $\W$ functional along the Ricci flow is complicated by the fact that we have to consider not only an evolution equation on the metric but also the evolution of a function that satisfies a backward heat equation. An alternative approach is to consider another functional introduced by Perelman called the $\mu$-\emph{functional},  which is defined by
\begin{align}\label{defmu}
\mu(g,\tau)=\inf_{\{f:\int_{M}4(\pi\tau)^{-n/2}dV_{g}=1\}}\{\W(g,f,\tau)\}.
\end{align} 
 Perelman proved in \cite{perelman1} that the infimum in \eqref{defmu} is always attained by a smooth function. The $\mu$-functional also plays an important role in controlling the soliton behavior of a metric along the Ricci flow but the consideration of \eqref{defmu} for our asymptotic limit problem introduces many technical difficulties. One important issue is that even though we know that a smooth minimizer $f$ always exists, it is not clear in general how this minimizer depends on the metric or even if it is unique for a given $g$.  This could create difficulties for computing the first and second variations  of the functional $g\rightarrow\mu(g,\tau)$ and for applying the Lyapunov-Schmidt reduction. It was proved by Sun and Wang in \cite{songwang10} that this uniqueness issue can be resolved by considering metrics near the shrinking soliton $g_{0}$ in \eqref{solitoneq1}. In other words, in a sufficiently small neighborhood of  $g_{0}$, the minimizer in \eqref{defmu} is unique and depends real-analytically on the metric. In this case, we can define a flow which can be thought of as the gradient flow of $\mu$ and which is equivalent to \eqref{tauflow} up to the action of a 1-parameter family of diffeomorphisms. Following \cite{songwang10} we will call this flow \emph{Modified Ricci flow} (it is also sometimes called \emph{Soliton Ricci Flow}, see for example \cite{npalisrf}). Furthermore, using a variant of the Ebin-Palais slice theorem one can fix a gauge where the second variation of $\mu$ at $g_{0}$ is elliptic, and then it is possible to adapt Leon Simon's approach to prove a {\L}ojasiewicz-type inequality for $\mu$ near $g_{0}$. If we now assume that $g_{0}$ is a sequential limit obtained in \cite{sesum06}, then the elements of a sequence $\{g(t_{i})\}$ which converges to $g_{0}$ will eventually lie in a neighborhood of $g_{0}$ where the Modified Ricci flow can be defined, and then the proof of Theorem \ref{mainthm} will be similar in spirit to the proof of Theorem \ref{proto}. This will prove that in our situation solutions of the modified Ricci flow converge to a shrinking soliton at a definite rate, which implies Theorem \ref{mainthm} after the use of a 1-parameter family of diffeomorphisms as suggested in the statement of Theorem \ref{mainthm}. A final comment about the {\L}ojasiewicz-Simon inequality is that the functional considered in \eqref{infiniteloja} depends on derivatives of order at most one on $u$ whereas the $\W$ and $\mu$ functionals involve second order derivatives on the metric. This difficulty was addressed in \cite{songwang10} and more recently in \cite{coldmini2012} for a more general class of functionals
 \footnote{The author recently learned that a {\L}ojasiewicz-Simon inequality has been proved for Perelman's $\lambda$-functional near a Ricci-flat metric on a closed manifold in \cite{muhas2013} following also the approach in \cite{coldmini2012}. This inequality had been proved in \cite{hasl2012} near Ricci-flat metrics that satisfy an integrability condition.}.
  
\subsection{Organization of the paper}  In Section \ref{monotonicitymu} we introduce a notion of regularity which was defined in \cite{songwang10} and that will allow us to give a precise definition of the Modified Ricci flow at least near a shrinking gradient Ricci soliton as outlined in Section \ref{outline} of this introduction. In Section \ref{uniqueness} we state in detail the {\L}ojasiewicz-Simon inequality for the $\mu$ functional and prove a stability property for solutions of the modified Ricci flow (Lemma \ref{stability}). This stability property is the key to reduce the proof of Theorem \ref{mainthm} to the proof of Theorem \ref{proto}. The proof of Theorem \ref{mainthm} will be given at the end of Section \ref{uniqueness}. Finally, in Appendix \ref{loj} we compute in detail the second variation of the $\mu$-functional at a shrinking gradient Ricci soliton which together with a gauge-fixing lemma (Lemma \ref{gaugefixing}) will be enough to use the results in \cite{coldmini2012} to give a proof of the {\L}ojasiewicz-Simon inequality for $\mu$ alternative to the one presented in \cite{songwang10}.

\subsection{Notation and conventions}
\begin{itemize}
\item For the rest of the paper, the metric $g_{0}$ will denote a compact shrinking gradient Ricci soliton satisfying \eqref{solitoneq1}.
\item $\M(M)$ will denote the space of smooth metrics on $M$.
\item $S^{2}(T^{*}M)$ will denote the space of smooth symmetric $(0,2)$ tensors on $M$.
\item We will frequently consider the linearization of functionals of the form 
\begin{align*}
\E:\M(M)\rightarrow\R,
\end{align*}
at a given metric $g\in\M(M)$. Given $h\in S^{2}(T^{*}M)$, consider a path of metrics $g_{\epsilon}$ for $\epsilon\in[0,1)$ satisfying $g_{\epsilon}\big{|}_{\epsilon=0}=g$ and $\frac{d}{d\epsilon}g_{\epsilon}\big{|}_{\epsilon=0}=h$. The linearization of $\E$ at $g$ in the direction of $h$ will be denoted by $\E^{'}_{g}(h)$ and is defined by
\begin{align*}
\E^{'}_{g}(h)=\frac{d}{d\epsilon}\E(g_{\epsilon})\big{|}_{\epsilon=0}.
\end{align*}  
\end{itemize}

\subsection{Acknowledgements} The author is grateful to Gang Tian for suggesting this problem and for his constant encouragement. The author is also indebted to Song Sun and Yuanqi Wang for explaining several technical points in their paper, to Jeff Viaclovsky for assistance in the formulation of Lemma \ref{gaugefixing}, to Nefton Pali for explaining several of his results, 
to Jeffrey Case for pointing out several useful references on Perelman's $\mu$-functional and to Yiyan Xu for many stimulating discussions. Finally, the author would like to thank Hans Joachim Hein, Luc Nguyen, Bing Wang, Reto M\"{u}ller, Aaron Naber, Haotian Wu and Davi Maximo for their interest and for sharing their insights on the Ricci flow.   

 \section{Monotonicity of the $\mu$-functional and the modified Ricci flow}\label{monotonicitymu}
 
 As mentioned in Section \ref{outline} of the introduction,  we would like to use the monotonicity of the $\W$-functional together with a gradient  (or {\L}ojasiewicz-Simon type) inequality  to prove convergence at a definite rate to a gradient Ricci soliton $g_{0}$ which was obtained as a sequential limit of a solution of \eqref{tauflow}. This is because we know by the monotonicity identity \eqref{Wmonotonicity} that $\W$ controls the soliton behavior of a metric along the Ricci flow. It turns out that in this case, it is more convenient to use Perelman's $\mu$-functional defined in \cite{perelman1} instead of the $\W$-functional. Recall that given a metric $g\in\M(M)$ and a number $\tau>0$, the $\mu$-functional is defined by  
 \begin{align*}
\mu(g,\tau)=\inf_{f\in \A_{0}}\{\W(g,f,\tau)\},
\end{align*}
where $\A_{0}$ is the class of functions
\begin{align*}
\A_{0}=\{f\in C^{\infty}(M):\int_{M}d\nu=1\},
\end{align*}
and $d\nu$ is the measure given by $d\nu=(4\pi\tau)^{-n/2}dV_{g}$. It was shown in \cite{perelman1}  that this infimum is always attained for some smooth function function $f$ and moreover, the minimizer satisfies the equation
\begin{align}\label{minimizermu}
\tau\left(2\Delta f-|\nabla f|^{2}+R\right)+(f-n)=\mu(g,\tau),
\end{align} 
(see for example \cite[Equation 12.16]{kleinerlott}). In principle the use of the $\mu$-functional introduces many technical problems, for it is not clear how the minimizer depends on the metric and therefore we cannot guarantee that we can obtain a sufficiently regular path $t\rightarrow (g(t),f_{t})$ where $g(t)$ is a solution of the Ricci flow equation and $f_{t}$ is a minimizer of $\W$, i.e., $\W(g(t),f_{t},\tau)=\mu(g(t),\tau)$. For this reason we consider the following notion of regularity introduced in \cite{songwang10}. Assume first without loss of generality that the function $f_{0}$ in \eqref{solitoneq1} satisfies $\int_{M}(4\pi\tau)^{-n/2}e^{-f_{0}}dV_{g_{0}}=1$.
\begin{definition}\label{regularmetric}
Let $k>1$ be an integer and let $\gamma$ be a number with $0<\gamma<1$.  A Riemannian metric $g\in\M(M)$ is said to be regular if there exists a $C^{k,\gamma}$ neighborhood $\mathcal{U}$ of $g$ in $\M(M)$ with the following properties

\begin{enumerate}
\item\label{uniquef} There is a unique function $\tilde{f}$ such that $\W(\tilde{g},\tilde{f},\tau)=\mu(\tilde{g},\tau)$.
\item\label{unitvolume} The function $\tilde{f}$ described in \eqref{uniquef}  satisfies $(4\pi\tau)^{-n/2}\int_{M}e^{-\tilde{f}}dV_{\tilde{g}}$=1.
\item\label{analytic} $\tilde{f}$ depends real analytically  on $\tilde{g}$.
\end{enumerate} 
A $C^{k,\gamma}$ neighborhood $\U$ of $g$ in the space $\M(M)$  satisfying \eqref{uniquef}, \eqref{unitvolume} and \eqref{analytic} is said to be a regular neighborhood of $g$.
\end{definition}
For the rest of this paper the parameters $k$ and $\gamma$ will be fixed.  A result proved in \cite{songwang10} is the following

\begin{lemma}[\cite{songwang10}]\label{lemregular} A compact shrinking gradient Ricci soliton $(M,g_{0})$ is regular.
\end{lemma}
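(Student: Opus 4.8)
The plan is to realize the minimizer of $\W(g,\cdot,\tau)$ as the solution of a semilinear elliptic equation and to produce it, together with its analytic dependence on the metric, by the real-analytic implicit function theorem. First I would remove the weight by the substitution $w=e^{-f/2}$. Writing $d\nu_g=(4\pi\tau)^{-n/2}dV_g$, the functional becomes
\[
\W(g,f,\tau)=\int_M\left[4\tau|\nabla w|^2+\tau R_g w^2-w^2\log w^2-nw^2\right]d\nu_g,
\]
and the constraint defining $\A_0$ becomes $\int_M w^2\,d\nu_g=1$. A positive minimizer $w$ (which exists and is smooth by Perelman's result, with $w_0=e^{-f_0/2}$ at $g_0$) then solves the Euler--Lagrange equation $-4\tau\Delta_g w+\tau R_g w-w\log w^2=E\,w$, where $E$ is the Lagrange multiplier. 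One checks that $w_0$ solves this at $g_0$ for a value $E_0$, so $w_0$ is the positive (hence ground-state) eigenfunction of the Schr\"{o}dinger operator $A=-4\tau\Delta_{g_0}+\tau R_{g_0}-\log w_0^2$ with eigenvalue $E_0$.

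Next I would set up the map
\[
\Psi(g,w,E)=\left(-4\tau\Delta_g w+\tau R_g w-w\log w^2-E\,w,\ \int_M w^2\,d\nu_g-1\right),
\]
regarded as $\U\times C^{k+2,\gamma}\times\R\to C^{k,\gamma}\times\R$ for a small $C^{k,\gamma}$ neighborhood $\U$ of $g_0$ and $w$ near $w_0$ in the positive cone. The two things to verify are that $\Psi$ is real-analytic and that $D_{(w,E)}\Psi$ at $(g_0,w_0,E_0)$ is a Banach-space isomorphism. Analyticity holds because $g\mapsto(\Delta_g,R_g,d\nu_g)$ is real-analytic in the $C^{k,\gamma}$ topology and because $w\mapsto w\log w^2$ is real-analytic on the positive cone containing $w_0$. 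The linearization in $(v,s)$ is $(Lv-s\,w_0,\ 2\langle w_0,v\rangle_{L^2(d\nu_{g_0})})$, where $L=(A-E_0)-2$ is exactly the second-variation operator; since $A$ is self-adjoint and $A w_0=E_0 w_0$, invertibility of $D_{(w,E)}\Psi$ reduces to invertibility of $L$ on the orthogonal complement $w_0^{\perp}$, i.e. to strict positivity of the constrained Hessian $\langle Lv,v\rangle$ on $\{v:\langle w_0,v\rangle=0\}$.

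The main obstacle is precisely this strict positivity. Minimality of $w_0$ only gives $\langle Lv,v\rangle\ge 0$ on $w_0^{\perp}$, equivalently $\lambda_1\ge E_0+2$ for the first eigenvalue $\lambda_1$ of $A$ above the ground state; what is needed is the strict inequality $\lambda_1>E_0+2$, i.e. the absence of a neutral mode transverse to $w_0$. I would obtain this from a Lichnerowicz--Obata-type lower bound for the bottom of the spectrum of the relevant (drift) operator on the shrinker: in the Einstein case ($f_0$ and $w_0$ constant) the gap is $\lambda_1-E_0=4\tau\lambda_1(-\Delta_{g_0})$, and since $Ric(g_0)=\tfrac{1}{2\tau}g_0$, Obata's theorem gives $\lambda_1(-\Delta_{g_0})\ge\tfrac{n}{2\tau(n-1)}>\tfrac{1}{2\tau}$, whence $\lambda_1-E_0\ge\tfrac{2n}{n-1}>2$ with a definite margin; the general gradient-shrinker case is handled by the analogous weighted eigenvalue estimate. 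Granting this, the real-analytic implicit function theorem yields, after shrinking $\U$, a unique real-analytic family $g\mapsto(w_g,E_g)$ solving $\Psi=0$ with $w_{g_0}=w_0$; positivity of $w_g$ persists by continuity, so $\tilde f:=-2\log w_g$ is a critical point of $\W(g,\cdot,\tau)$ depending real-analytically on $g$ and satisfying the normalization in \eqref{unitvolume} by construction.

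It remains to promote this critical point to the \emph{unique} minimizer, i.e. to verify property \eqref{uniquef}. Here I would argue by continuity and compactness: since $\mu(\cdot,\tau)$ is continuous and, by the minimizer equation \eqref{minimizermu}, minimizers obey uniform elliptic bounds (their Lagrange multipliers staying bounded near $g_0$), any minimizer $w$ for $g\in\U$ is $C^{2}$-close to $w_0$ once $\U$ is small; the non-degeneracy established above makes critical points isolated, so $w$ lies in the neighborhood where the implicit function theorem guarantees uniqueness of solutions of $\Psi=0$, forcing $w=w_g$. This produces a regular neighborhood of $g_0$ in the sense of Definition \ref{regularmetric} and proves the lemma.
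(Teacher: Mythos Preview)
Your argument is essentially correct and its core coincides with the paper's: the nondegeneracy needed for the implicit function theorem is exactly the spectral gap for the drift Laplacian on the shrinker. Indeed, conjugating your Schr\"{o}dinger operator $A$ by its ground state $w_0=e^{-f_0/2}$ gives $w_0^{-1}(A-E_0)(w_0 u)=-4\tau\Delta_{g_0}^{f_0}u$, so your condition $\lambda_1(A)>E_0+2$ is equivalent to $\lambda_1(-\Delta_{g_0}^{f_0})>\tfrac{1}{2\tau}$, which is precisely Lemma~\ref{eigenvalue} in the appendix (and which the paper flags in a footnote as the crucial input to the proof of Lemma~\ref{lemregular}).

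Where your route differs from the one the paper cites from \cite{songwang10}: they apply the implicit function theorem to a \emph{fourth}-order operator (working in the $f$-variable rather than the $w$-variable) and invoke the logarithmic Sobolev inequality of Rothaus explicitly. Your second-order setup via $w=e^{-f/2}$ is cleaner for isolating the spectral gap, but in your uniqueness step you assert that ``minimizers obey uniform elliptic bounds'' without saying how to get a uniform $C^0$ bound on an \emph{arbitrary} minimizer for $g$ near $g_0$ (before you know it lies near $w_0$). The Euler--Lagrange equation has the nonlinear term $w\log w^2$, and bootstrapping to $C^{2}$ requires an a priori sup bound on $w$; this is exactly where the Rothaus log-Sobolev estimates enter in \cite{songwang10}. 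Once that ingredient is supplied, your compactness-plus-isolation argument goes through, and the two approaches yield the same regular neighborhood.
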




The approach used in \cite{songwang10} to prove Lemma \ref{lemregular} is based on an application of the implicit function theorem to a fourth order operator together with several elliptic estimates and a logarithmic Sobolev inequality derived in \cite{rothaus}. In case a metric $g$ is in a regular neighborhood $\U$ of $g_{0}$, we use $P(g)$ to denote the unique function satisfying \eqref{uniquef}, \eqref{unitvolume} and \eqref{analytic} in Definition \ref{regularmetric}. It is then easy to compute the first variation of the $\mu$-functional at a regular metric using that $\W(\tilde{g},P(\tilde{g}),\tau)=\mu(\tilde{g},\tau)$ for all $\tilde{g}$ in a regular neighborhood of $g$. A simple computation shows that for any $h\in S^{2}(T^{*}M)$ we must have
\begin{align*}
\mu^{'}_{g}(h)=-\tau\int_{M}\langle Ric(g)+\nabla^{2}f-\frac{1}{2\tau}g,h\rangle_{g} d\nu,
\end{align*}     
where $f$ is the function associated to $g$ that satisfies conditions \eqref{uniquef},\eqref{unitvolume} and \eqref{analytic} in Definition \ref{regularmetric} (since the parameter $\tau$ is fixed throughout the paper, we often use $\mu$ to denote $\mu(\cdot,\tau)$). In particular,  if $g_{0}$ is a shrinking gradient Ricci soliton given by  \eqref{solitoneq1} then $g_{0}$ is a critical point of $\mu$ (i.e. of the functional $g\rightarrow \mu(g,\tau)$). We will use $\nabla\mu(g)$ to denote the gradient or differential of $\mu(\cdot,\tau)$ at a regular metric $g$ which is given explicitly by
\begin{align}\label{gradmufunctional}
\nabla\mu(g)=-\tau\left(Ric(g)+\nabla^{2}f-\frac{1}{2\tau}g\right).
\end{align} 

\begin{convention}\label{conventionP}\em{ From Lemma \ref{lemregular}, given a shrinking gradient Ricci soliton $g_{0}$, and a regular neighborhood $\U$ of $g_{0}$, we will assume for the rest of the paper that we have a fixed function $P:\U\rightarrow C^{k,\gamma}_{g_{0}}(M)$ such that the map $g\rightarrow P(g)$ satisfies conditions \eqref{uniquef}, \eqref{unitvolume} and \eqref{analytic} in Definition \ref{regularmetric}.
}
\end{convention}

\begin{remark}\label{meaninganalytic}\em{
The precise meaning of  ``real analytic dependence" in \eqref{analytic} in Definition \ref{regularmetric} is in the sense explained in  \cite[Section 2.7]{nirenbergtopics}. For example, the convention in \cite{nirenbergtopics} for defining real analytic maps between Banach spaces  is the following: let $X,Y$ be real Banach spaces and let $U\subset X$ be an open set. Let $\bar{X},\bar{Y}$ be the complexification of $X$ and $Y$ respectively,  then a map $f:U\subset X\rightarrow Y$ is said to be real analytic if  $f$ is the restriction of a holomorphic map $\bar{f}:V\subset \bar{X}\rightarrow \bar{Y}$ where $V$ is a neighborhood of $U$ in $\bar{X}$. Also, $\bar{f}:V\subset X\rightarrow \bar{Y}$ being holomorphic means that $\bar{f}$ is differentiable at every point $x\in V$.  Also if $f:V\subset \bar{X}\rightarrow \bar{Y}$ is continuous then $f$ is real analytic if and only if for every finite dimensional subspace $C\subset \bar{X}$ and every $y^{*}$ in the dual space of $\bar{Y}$ the map $y^{*}(f):C\cap U\rightarrow\mathbb{C}$ is holomorphic. 
In the same reference there is also a statement of the implicit function theorem in the real-analytic category. See also \cite[Sections 6]{coldmini2012}. 
}
\end{remark}

\subsection{The Modified Ricci Flow}

We now consider a construction which leads to a flow that can be thought of as the \emph{gradient flow} of the $\mu$-functional. Let $\U$ be a regular neighborhood of $g_{0}$, and let $g(t)$ be a solution of \eqref{tauflow} which exists for all time $t\ge 0$ and such that $g(T)\in \U$ for some $T>0$. For some $\epsilon>0$, all the metrics $g(t)$ with $t\in[T,T+\epsilon]$ are in $\U$, and by putting $f_{t}=P(g(t))$ we have $\W(g(t),f_{t},\tau)=\mu(g(t),\tau)$. Write any $t\in [T,T+\epsilon]$ as $t=T+s$ and let $\psi_{s}$ with $s\in[0,\epsilon]$ be the flow generated by the vector field $-\nabla f_{T+s}$ (time dependent). Note that the flow is defined as long as $g(T+s)\in\U$.  If we set $\tilde{g}(s)=\psi^{*}_{s}g(T+s)$ then the metric $\tilde{g}(s)$ satisfies the system
\begin{align}
\partial_{s}\tilde{g}(s)&=-2\left(Ric(\tilde{g}(s))+\nabla^{2}\tilde{f}_{s}-\frac{1}{2\tau}\tilde{g}(s)\right),\label{solitongradflow}\\
\tilde{g}(0)&=g(T),\label{initialmetric}
\end{align}
where $\tilde{f}_{s}=\psi_{s}^{*}f_{T+s}$ and where the Hessian $\nabla^{2}\tilde{f}_{s}$ in \eqref{solitongradflow} is taken with respect to the metric $\tilde{g}(s)$. Note that if we apply this construction to a metric $g_{0}$ satisfying \eqref{solitoneq1}, i.e., if we take $\tilde{g}(0)=g_{0}$ then $\{\tilde{g}(s)\}$ exists for all $s\ge 0$ and moreover $\tilde{g}(s)=g_{0}$ for all $s\ge0$. For the flow $\{\tilde{g}(s)\}$ to be the gradient flow of the $\mu$-functional, there are two conditions that need to be verified, namely
\begin{enumerate}
\item[(a)] For each time $s$ such that $\tilde{g}(s)$ is defined, the metric $\tilde{g}(s)$ is regular,
\item[(b)] In  \eqref{solitongradflow}, $\tilde{f}_{s}$  is the unique function associated to $\tilde{g}(t)$ such that conditions (\ref{uniquef}), (\ref{unitvolume}) and (\ref{analytic}) in Definition \ref{regularmetric} are satisfied.
\end{enumerate}
Let $\phi:M\rightarrow M$ be a $C^{\infty}$ diffeomorphism, then recall that the $\W$ functional is diffeomorphism invariant in the sense that for a metric $g$ and a smooth function $f$ we have $\W(\phi^{*}g,\phi^{*}f,\tau)=\W(g,f,\tau)$. In case $g_{0}$ is a shrinking gradient Ricci soliton, the metric $\phi^{*}g_{0}$ is also a shrinking soliton and therefore, $\phi^{*}g_{0}$ is regular. In particular, if $\U$ is a sufficiently small regular neighborhood of $g_{0}$ and $\phi$ is sufficiently close to the identity in the $C^{k+1,\gamma}$ topology then the image of $\U$ under $\phi$ is a regular neighborhood of $\phi^{*}g_{0}$ and in consequence, if $g\in \U$ and $f$ is such that $\W(g,f,\tau)=\mu(g,\tau)$, then $\phi^{*}f$ is the only function described in Definition \ref{regularmetric} satisfying $\W(\phi^{*}g,\phi^{*}f,\tau)=\mu(\phi^{*}g,\tau)=\mu(g,\tau)$. Using this observation together with standard existence theory for solutions of ordinary differential equations, we see that we can guarantee (a) and (b) at least for short time.  Note that the functions $\tilde{f}(s)$ above constructed are given by $P(\tilde{g}(s))$ as in Convention \ref{conventionP}. The above construction motivates the following definition

\begin{definition}\label{defmodRF}
A one parameter family of metrics $\{\tilde{g}(t)\}$ is said to be a solution of the Modified Ricci Flow in the interval $[0,T)$ (possibly with $T=\infty$) starting at $g$, if there exists a regular neighborhood $\U$ of the shrinking Ricci soliton $g_{0}$ such that $\tilde{g}(t)\in\U$ for $t\in [0,T)$ and solves the equation
\begin{align}\label{ModRF}\left\{
\begin{array}{ll}
\partial_{t}\tilde{g}(t)=-2\left(Ric(\tilde{g}(t))+\nabla^{2}\tilde{f}_{t}-\frac{1}{2\tau}\tilde{g}(t)\right),\\
\tilde{g}(0)=g,
\end{array}
\right.
\end{align}
where $\tilde{f}_{t}=P(\tilde{g}(t))$ as in Convention \ref{conventionP}.
\end{definition}

Let $\tilde{g}(t)$ be a solution of the modified Ricci flow \eqref{ModRF}, then by using the first variation of the  $\mu$-functional, we obtain the following monotonicity identity along the modified Ricci flow
\begin{align*}
\frac{d}{dt}\mu(\tilde{g}(t),\tau)=2\tau\int_{M}\left| Ric(\tilde{g}(t))+\nabla^{2}\tilde{f}_{t}-\frac{1}{2\tau}\tilde{g}(t)\right|_{\tilde{g}(t)}^{2}d\nu_{t},
\end{align*}
 where $d\nu_{t}$ is the measure given by
 \begin{align*}
 d\nu_{t}=(4\pi\tau)^{-n/2}e^{-\tilde{f}_{t}}dV_{\tilde{g}(t)}.
 \end{align*}

We summarize the above remarks in the following

\begin{lemma}\label{existence}  Let $g(t)$ be a solution of \eqref{tauflow} which exists for all time $t\ge 0$. If  $\|g(0)-g_{0}\|_{C^{k+4,\gamma}_{g_{0}}}<\delta$ for $\delta>0$ sufficiently small, then there exists an interval $[0,T]$ with $T>0$ such that there exists a solution $\tilde{g}(t)$ of the modified Ricci flow \eqref{ModRF} and for all $t\in[0,T]$ we have $\|\tilde{g}(t)-g_{0}\|_{C^{k,\gamma}_{g_{0}}}<\delta$. Here by $C^{m,\gamma}_{g_{0}}$ we mean that the H\"{o}lder norm is taken with respect to the metric $g_{0}$.
\end{lemma}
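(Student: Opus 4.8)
The plan is to build the required modified-Ricci-flow solution \emph{directly} from the given solution $g(t)$ of \eqref{tauflow} by pulling it back along the family of diffeomorphisms generated by $-\nabla P(g(t))$, exactly as in the computation preceding Definition \ref{defmodRF}, and then to extract the quantitative estimate by a soft continuity argument rather than by solving \eqref{ModRF} from scratch (which would be awkward because of the nonlocal term $P(\tilde g)$). Using Lemma \ref{lemregular} I fix once and for all a regular neighborhood $\U$ of $g_{0}$, and I shrink the threshold $\delta$ so that the $C^{k,\gamma}_{g_{0}}$-ball of radius $\delta$ about $g_{0}$ lies inside $\U$; then any metric within $\delta$ of $g_{0}$ in $C^{k,\gamma}_{g_{0}}$ is automatically regular.

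First I would record the input from the flow side. The given $g(t)$ is smooth on $M\times[0,\infty)$, so $t\mapsto g(t)$ is continuous into $C^{k+4,\gamma}_{g_{0}}$; since $\tilde g(0)=g(0)=g$ lies strictly inside the $\delta$-ball, there is a short initial interval on which $g(t)\in\U$. On this interval the minimizer $f_{t}:=P(g(t))$ is well defined, and because $P$ depends real-analytically (in particular smoothly) on the metric and $g(t)$ is smooth in $t$, the time-dependent vector field $-\nabla f_{t}=-\nabla P(g(t))$ is smooth in $(x,t)$. Elliptic regularity applied to the minimizer equation \eqref{minimizermu} shows that $P$ preserves Hölder regularity, and the gap between the $C^{k+4,\gamma}$ hypothesis and the $C^{k,\gamma}$ conclusion provides a deliberately generous budget for the derivatives consumed in passing from $g(t)$ to $\nabla P(g(t))$, then to its flow, and finally to the pulled-back metric.

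Next I would produce the gauge diffeomorphisms. Solving the ODE $\partial_{s}\psi_{s}=-\nabla f_{s}\circ\psi_{s}$ with $\psi_{0}=\mathrm{id}$ on the compact manifold $M$ yields, for $s$ in the interval where $f_{s}$ is defined, smooth diffeomorphisms $\psi_{s}$ depending smoothly on $s$ with $\psi_{s}\to\mathrm{id}$ in $C^{k+1,\gamma}$ as $s\to 0$ (integrate the $C^{k+1,\gamma}$-bounded vector field over a short time). Setting $\tilde g(s)=\psi_{s}^{*}g(s)$, the computation already carried out in the text shows $\tilde g(s)$ solves \eqref{ModRF} with $\tilde f_{s}=\psi_{s}^{*}f_{s}$. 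It remains to verify the gauge condition that $\tilde f_{s}$ is exactly $P(\tilde g(s))$: by the diffeomorphism invariance of $\W$ and $\mu$ one has $\W(\tilde g(s),\tilde f_{s},\tau)=\W(g(s),f_{s},\tau)=\mu(g(s),\tau)=\mu(\tilde g(s),\tau)$, so $\tilde f_{s}$ attains the infimum defining $\mu(\tilde g(s),\tau)$; as long as $\tilde g(s)$ remains in the regular neighborhood $\U$ the minimizer is unique and hence equals $P(\tilde g(s))$, which is precisely conditions (a) and (b).

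Finally I would close the estimate by continuity, taking care to avoid circularity. On the interval above $s\mapsto\tilde g(s)$ is continuous into $C^{k,\gamma}_{g_{0}}$, and at $s=0$ we have $\|\tilde g(0)-g_{0}\|_{C^{k,\gamma}_{g_{0}}}=\|g-g_{0}\|_{C^{k,\gamma}_{g_{0}}}\le\|g-g_{0}\|_{C^{k+4,\gamma}_{g_{0}}}<\delta$, a strict inequality that persists for short time. To make this rigorous I would run an openness/bootstrap argument: let $T$ be the supremum of times up to which both $\tilde g(s)\in\U$ and the estimate hold; the previous steps show this set of times is nonempty and open, so $T>0$, and on $[0,T]$ the metric $\tilde g(s)$ stays regular, which retroactively validates the gauge identity $\tilde f_{s}=P(\tilde g(s))$ used in the construction. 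The main obstacle, and essentially the only place genuine analysis enters beyond ODE theory, is this regularity bookkeeping: one must control $\psi_{s}$ and the pullback $\psi_{s}^{*}g(s)$ in $C^{k,\gamma}_{g_{0}}$ in terms of the $C^{k+4,\gamma}_{g_{0}}$-data, for instance via the decomposition $\tilde g(s)-g_{0}=\psi_{s}^{*}(g(s)-g_{0})+(\psi_{s}^{*}g_{0}-g_{0})$ together with the elliptic regularity of $P$, so that $\U$ is genuinely preserved for a definite time $T>0$.
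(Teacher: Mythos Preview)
Your proposal is correct and follows essentially the same route as the paper's proof: build $\tilde g(s)=\psi_{s}^{*}g(s)$ by integrating the time-dependent vector field $-\nabla P(g(s))$, verify via diffeomorphism invariance and uniqueness of the minimizer that $\tilde f_{s}=P(\tilde g(s))$, and then conclude by continuity at $s=0$. The only notable difference is in how the derivative bookkeeping is justified: the paper passes through the Ricci--DeTurck flow (whose strict parabolicity gives continuous dependence on initial data) to explain the two successive losses of two derivatives, $C^{k+4,\gamma}\to C^{k+2,\gamma}\to C^{k,\gamma}$, whereas you bypass the DeTurck step and appeal directly to continuity of the given smooth solution $t\mapsto g(t)$. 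Since $g(t)$ is hypothesized to exist and be smooth, your shortcut is legitimate and arguably cleaner; the paper's DeTurck detour buys nothing extra here beyond making the derivative count explicit.
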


\begin{proof} By the results in \cite[Chapter 3]{chowknopf}, there exists an interval $[0,T]$ with $T>0$ such that there exists a unique solution $\hat{g}(t)$ with $\hat{g}(0)=g(0)$ of the Ricci-DeTurck flow associated to the normalization \eqref{tauflow} and such that $\|\hat{g}(t)-g_{0}\|_{C^{k+4,\gamma}_{g_{0}}}<\delta$  for all $t\in[0,T]$. This is because the Ricci-DeTurck flow is a strictly parabolic quasilinear system which can be solved locally in time by means of a fixed-point approach, and hence the solution $\hat{g}$ depends continuously on the initial data. From $\hat{g}$ we obtain a unique solution of \eqref{tauflow} with $g(0)$ prescribed, and without loss of generality we may assume that $T$ is such that for all $t\in[0,T]$ the solution $g(t)$ to \eqref{tauflow} satisfies $\|g(t)-g_{0}\|_{C^{k+2,\gamma}_{g_{0}}}<\delta$. We may also assume that $\delta$ is small enough so that all metrics  $g(t)$ with $t\in [0,T]$ are regular. We can then follow the construction outlined at the beginning of this subsection to obtain a solution $\tilde{g}(t)$ to the modified Ricci flow \eqref{ModRF} with initial condition $g(0)$ for $t\in[0,T]$ and  we may assume that $T>0$ is such that $\|\tilde{g}(t)-g_{0}\|_{C^{k,\gamma}_{g_{0}}}<\delta$ for $t\in [0,T]$. Note that we need to assume that the initial metric  satisfies $\|g(0)-g_{0}\|_{C^{k+4,\gamma}_{g_{0}}}<\delta$ because in passing from the Ricci-DeTurck flow associated to \eqref{tauflow} to a solution of \eqref{tauflow} there is a loss of two derivatives by the action of a 1-parameter family of diffeomorphisms. Analogously, there is a loss of two derivates when passing from \eqref{tauflow} to the modified Ricci flow.

\end{proof}

\begin{remark}\em{
In the statement of Lemma \eqref{existence}, the uniqueness of solutions of \eqref{ModRF} is not addressed. This technical issue is the subject of the following subsection
}
\end{remark}

\subsection{Canonical solutions of the Modified Ricci flow} 

As seen in the construction motivating Definition \ref{defmodRF}, the existence of solutions for the modified Ricci flow presents many technical issues. Recently, a version of this flow called \emph{Soliton Ricci Flow} has been studied in great detail in  \cite{npaliwvar} and \cite{npalisrf}, but our approach is different since it only concerns regular metrics. A basic question about the modified Ricci flow is if we can formulate a simple long time existence criterion as the one in \cite{ham82} for the Ricci flow. For this purpose it would be useful to know that the solution of the modified Ricci flow above constructed is unique, but to prove a general uniqueness result seems to present many technical difficulties. Instead of proving  uniqueness of solutions of the modified Ricci flow in full generality, we show that it is always possible to define a smooth canonical solution which has a very useful extension property. Before proving our next result, we introduce the following notation: if $\delta>0$ we will use $\U^{k,\gamma}_{\delta}$ to denote a neighborhood of $g_{0}$ of the form
\begin{align}\label{holderneighborhood}
\U^{k,\gamma}_{\delta}=\{g\in\M:\|g-g_{0}\|_{C^{k,\gamma}_{g_{0}}(M)}<\delta\}.
\end{align}
We will assume that $\delta$ is small enough so that  $\U^{k,\gamma}_{\delta}$ is a regular neighborhood of $g_{0}$. We now show that there is a canonical way of choosing \emph{smooth} solutions of the modified Ricci flow.      


\begin{lemma}\label{canonicallemma} Let $g(t)$ be a solution of \eqref{tauflow} which exists for all time $t\ge 0$ and let $\tilde{g}(t)$ be a solution of \eqref{ModRF} with $\tilde{g}(0)=g(0)$ which exists for all $t$ in the interval $[0,T]$ for some $T>0$ (in particular all metrics $\tilde{g}(t)$ are regular for $0\le t\le T$). Then for all $t\in [0,T]$ there exists a unique function $f_{t}$  such that 
\begin{align}
\W(g(t),f_{t},\tau)&=\mu(g(t),\tau)\label{minimizing}\\
\int_{M}(4\pi\tau)^{-n/2}e^{-f_{t}}dV_{g(t)}&=1\label{unitintegral}.
\end{align}
Moreover, if $g(0)$ is in a regular neighborhood $\U^{k,\gamma}_{\delta}$ of a shrinking soliton $g_{0}$ and if $\psi_{t}$ is the flow generated by the vector field $-\nabla\psi_{t}$ for $0\le t\le T$ then for some $T^{'}$ with $0<T^{'}\le T$ the metric $\bar{g}(t)=-\psi_{t}^{*}g(t)$ is a solution of \eqref{ModRF} and $\bar{g}(t)\in\U^{k,\gamma}_{\delta}$ for $t\in[0,T^{'}]$ and all the metrics $\bar{g}(t)$ are smooth.
\end{lemma}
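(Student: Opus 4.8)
The plan is to prove the two assertions separately. For the first, existence of a minimizer $f_t$ of $\W(g(t),\cdot,\tau)$ satisfying \eqref{unitintegral} is immediate from Perelman's theorem that the infimum defining $\mu(g(t),\tau)$ is attained by a smooth function in the constraint class $\A_0$; only uniqueness requires an argument. I would obtain uniqueness by showing that every $g(t)$, $t\in[0,T]$, is a regular metric in the sense of Definition \ref{regularmetric}, since condition \eqref{uniquef} then forces $f_t$ to be unique. The metrics $\tilde g(t)$ are regular by hypothesis, so it suffices to relate $g(t)$ to $\tilde g(t)$ by a diffeomorphism and to invoke the diffeomorphism invariance of $\W$ (hence of regularity) noted before Definition \ref{defmodRF}. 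Concretely, with $\tilde f_t=P(\tilde g(t))$ I would let $\chi_t$ solve $\partial_t\chi_t=(\nabla^{\tilde g(t)}\tilde f_t)\circ\chi_t$, $\chi_0=\mathrm{id}$; a Lie-derivative computation shows that $\chi_t^*\tilde g(t)$ solves \eqref{tauflow} with initial data $g(0)$, so uniqueness of solutions of \eqref{tauflow} on the closed manifold $M$ yields $g(t)=\chi_t^*\tilde g(t)$. Since regularity is preserved by the diffeomorphisms $\chi_t$, the metric $g(t)$ inherits regularity from $\tilde g(t)$, and its unique minimizer is $f_t=\chi_t^*\tilde f_t=P(g(t))$.

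For the second assertion I would take $f_t=P(g(t))$ from the first part, let $\psi_t$ be the flow of the time-dependent vector field $-\nabla^{g(t)}f_t$ with $\psi_0=\mathrm{id}$ (which exists on all of $[0,T]$ because $M$ is compact), and set $\bar g(t)=\psi_t^*g(t)$. The key computation is
\begin{align*}
\partial_t\bar g(t)=\psi_t^*\big(\partial_t g(t)\big)+\psi_t^*\big(\mathfrak{L}_{-\nabla f_t}g(t)\big)=-2\,\psi_t^*\Big(Ric(g(t))-\tfrac{1}{2\tau}g(t)+\nabla^2 f_t\Big),
\end{align*}
which by naturality of the Ricci tensor and of the Hessian equals $-2\big(Ric(\bar g(t))+\nabla^2\bar f_t-\tfrac{1}{2\tau}\bar g(t)\big)$ with $\bar f_t=\psi_t^*f_t$. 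To confirm that this is genuinely \eqref{ModRF} I must check that $\bar f_t=P(\bar g(t))$: because $\W$ and the normalization \eqref{unitintegral} are diffeomorphism invariant, $\psi_t^*f_t$ minimizes $\W(\bar g(t),\cdot,\tau)$ and is unit-normalized, and $\bar g(t)$ is regular as a diffeomorphic image of the regular metric $g(t)$, so its minimizer is unique and equals $\psi_t^*f_t$. Finally $\bar g(0)=g(0)\in\U^{k,\gamma}_\delta$, so continuity in $t$ yields some $T'\in(0,T]$ with $\bar g(t)\in\U^{k,\gamma}_\delta$ on $[0,T']$.

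The delicate point, and the reason the lemma is stated, is the smoothness of $\bar g(t)$; I would import it from the underlying flow rather than from \eqref{ModRF} itself. As a solution of the ungauged Ricci flow \eqref{tauflow} on a closed manifold, $g(t)$ is smooth in space-time by parabolic regularity, and each minimizer $f_t$ is smooth by elliptic regularity applied to \eqref{minimizermu}, with smooth $t$-dependence coming from condition \eqref{analytic} of Definition \ref{regularmetric} composed with the smooth curve $t\mapsto g(t)$. Hence $-\nabla^{g(t)}f_t$ is a smooth time-dependent vector field, its flow $\psi_t$ is a smooth family of diffeomorphisms, and $\bar g(t)=\psi_t^*g(t)$ is smooth. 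The main obstacle is precisely this regularity bookkeeping: the minimizer map $P$ is only guaranteed to be $C^{k,\gamma}$-valued, so smoothness of the canonical solution cannot be read off from the flow equation and must instead be traced, through the explicit gauge construction $\bar g(t)=\psi_t^*g(t)$, back to the smoothness of $g(t)$ and of the $\psi_t$.
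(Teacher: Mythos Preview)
Your proposal is correct and follows essentially the same approach as the paper: both transfer the uniqueness of the minimizer from the regular metrics $\tilde g(t)$ to $g(t)$ via a diffeomorphism, and both obtain the canonical solution by pulling back $g(t)$ along the flow of $-\nabla f_t$ as in the construction preceding Definition~\ref{defmodRF}. Your argument is in fact more explicit than the paper's in two places---you justify the existence of the diffeomorphism $\chi_t$ relating $g(t)$ and $\tilde g(t)$ by constructing it and invoking uniqueness for \eqref{tauflow}, whereas the paper simply asserts its existence, and you spell out the smoothness bookkeeping that the paper leaves implicit.
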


\begin{remark}\em{
 Note that the statement of uniqueness involving equations \eqref{minimizing} and \eqref{unitintegral} is very similar to the uniqueness property in Definition \ref{regularmetric}, however, in this case we are not a priori assuming that $g(t)$ is in a regular neighborhood of $g_{0}$.
 }
\end{remark}

\begin{proof}
In order to prove the uniqueness of the functions $f_{t}$, we let $\hat{f}_{t}$ be any other smooth function satisfying \eqref{minimizing}, \eqref{unitvolume}, then we note that for every $t\in[0,T]$ there exists a diffeomorphism $\phi_{t}$ such that $\phi^{*}_{t}\tilde{g}(t)=g(t)$, and by the diffeomorphism invariance of the $\W$ functional we have
\begin{align*}
\W(\tilde{g}(t),(\phi^{-1}_{t})^{*}f_{t},\tau)=\mu(\tilde{g}(t),\tau)=\W(\tilde{g}(t),(\phi^{-1}_{t})^{*}\hat{f}_{t},\tau),
\end{align*}   
and since all metrics $\tilde{g}(t)$ are regular for $t\in [0,T]$, we have $(\phi^{-1}_{t})^{*}f_{t}=(\phi^{-1}_{t})^{*}\hat{f}_{t}$ and therefore $f_{t}=\hat{f}_{t}$.  The rest of the lemma follows easily from the proof of Lemma \ref{existence}, since for some $T^{'}>0$ (in principle $T^{'}\le T$) we can guarantee that all metrics $\hat{g}(t)=\psi^{*}_{t}g(t)$, with $t\in[0,T^{'}]$ are regular. 
\end{proof}
The solution to \eqref{ModRF} obtained in Lemma \ref{canonicallemma} is called the \emph{Canonical Solution of the Modified Ricci Flow}.  Note that the solution $\bar{g}(t)$ in Lemma \ref{canonicallemma} is smooth since the functions $\bar{f}(t)$ are smooth. For this canonical solution we have the following extension result.  
 
 
 

\begin{lemma}\label{criterion} Suppose that $g(t)$ is a solution of the Ricci flow which exists for all time $t\ge 0$ and suppose that $g(0)\in\U^{k,\gamma}_{\delta}$. Suppose that the canonical solution $\tilde{g}(t)$ of the Ricci flow with $\tilde{g}(0)=g(0)$ is defined in $[0,T)$ and for all $t\in [0,T)$ we have $\tilde{g}(t)\in\U^{k,\gamma}_{\delta}$. If $\limsup_{t\nearrow T}\|\tilde{g}(t)-g_{0}\|_{C^{k,\gamma}_{g_{0}}}<\delta$ then there exists $\epsilon>0$ such that $\tilde{g}(t)$ is defined and smooth in $[0,T+\epsilon]$ and $\tilde{g}(t)\in\U^{k,\gamma}_{\delta}$ for all $t\in[0,T+\epsilon]$. 
\end{lemma}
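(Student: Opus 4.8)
The plan is to treat this as a standard continuation (non-collapse) argument: the only obstruction to prolonging the canonical solution past time $T$ is that $\tilde{g}(t)$ might run to the boundary of the regular neighborhood, where the uniqueness and analytic dependence of the minimizer can fail, and the hypothesis $\limsup_{t\nearrow T}\|\tilde{g}(t)-g_{0}\|_{C^{k,\gamma}_{g_0}}<\delta$ is exactly what rules this out. First I would use this hypothesis to choose numbers $\delta_{1}<\delta$ and $T_{1}<T$ with $\|\tilde{g}(t)-g_{0}\|_{C^{k,\gamma}_{g_0}}\le\delta_{1}$ for all $t\in[T_{1},T)$. Thus the solution remains in the closed ball $\overline{\U}^{k,\gamma}_{\delta_{1}}$, which is contained in the open regular neighborhood $\U^{k,\gamma}_{\delta}$ and stays uniformly away from its boundary by the gap $\delta-\delta_{1}>0$.

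Next I would produce the limiting metric $\tilde{g}(T)$. Recall from Lemma \ref{canonicallemma} that $\tilde{g}(t)=\psi_{t}^{*}g(t)$, where $g(t)$ solves \eqref{tauflow} and $\psi_{t}$ is the flow generated by $-\nabla f_{t}$, with $f_{t}$ the minimizer associated to $g(t)$ and $\tilde{f}_{t}=\psi_{t}^{*}f_{t}=P(\tilde{g}(t))$ the minimizer for $\tilde{g}(t)$. Since $g(t)$ exists for all time with uniformly bounded curvature (assumption \eqref{Rmbounded}), parabolic regularity together with Shi's derivative estimates make $g(t)$ smooth with uniform bounds on $[T_{1},T]$, so it extends smoothly to and slightly past $t=T$. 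On $[T_{1},T)$ the metrics $\tilde{g}(t)$ (isometric to $g(t)$) obey the uniform bound $\|\tilde{g}(t)-g_{0}\|_{C^{k,\gamma}_{g_0}}\le\delta_{1}$, and the minimizers satisfy the elliptic equation \eqref{minimizermu}; hence elliptic estimates yield uniform control on $\tilde{f}_{t}$, $\nabla\tilde{f}_{t}$, and thus on the vector fields generating $\psi_{t}$. Standard ODE theory on the compact manifold $M$ then gives convergence of $\psi_{t}$ to a smooth diffeomorphism $\psi_{T}$ as $t\nearrow T$, whence $\tilde{g}(t)\to\tilde{g}(T):=\psi_{T}^{*}g(T)$ in $C^{k,\gamma}$. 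By the first step, $\|\tilde{g}(T)-g_{0}\|_{C^{k,\gamma}_{g_0}}\le\delta_{1}<\delta$, so $\tilde{g}(T)$ is a smooth metric lying strictly inside $\U^{k,\gamma}_{\delta}$.

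With smooth data $\tilde{g}(T)$ strictly inside the regular neighborhood, I would continue the flow by repeating the construction of the canonical solution of Lemma \ref{canonicallemma}, now with initial time $T$. Because the underlying Ricci flow $g(t)$ is already globally defined and smooth, no derivative is lost in the continuation: for $t$ slightly larger than $T$ the metric $\tilde{g}(t)$ remains regular by continuity, so the minimizers $f_{t}$ are well defined, unique (by the argument in Lemma \ref{canonicallemma}), and smooth, the vector field $-\nabla f_{t}$ stays controlled, and the diffeomorphism ODE is solvable on $[T,T+\epsilon]$. This produces a smooth solution $\tilde{g}(t)=\psi_{t}^{*}g(t)$ on $[T,T+\epsilon]$ which, by the uniqueness statement in Lemma \ref{canonicallemma}, agrees with and prolongs the original canonical solution; hence $\tilde{g}(t)$ is defined and smooth on $[0,T+\epsilon]$. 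Finally, since $\tilde{g}(T)\in\U^{k,\gamma}_{\delta_{1}}$ with $\delta_{1}<\delta$ and $t\mapsto\tilde{g}(t)$ is continuous in $C^{k,\gamma}$, after shrinking $\epsilon$ I retain $\tilde{g}(t)\in\U^{k,\gamma}_{\delta}$ for all $t\in[0,T+\epsilon]$.

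The step I expect to be the main obstacle is the construction of the limit $\tilde{g}(T)$ in $C^{k,\gamma}$ in the second paragraph: one must control the pullback diffeomorphisms $\psi_{t}$ all the way up to $t=T$ while simultaneously tracking the spatial regularity of the metric, combining the global smoothness and uniform bounds of the underlying Ricci flow with the uniform control on $\tilde{f}_{t}$ that follows from $\tilde{g}(t)$ remaining in a regular neighborhood bounded away from its boundary. Reconciling the two-derivative losses noted in the proof of Lemma \ref{existence} with the fact that here the underlying flow is already smooth, so that the continuation incurs no such loss, is the delicate point to phrase cleanly.
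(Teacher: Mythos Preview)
Your proposal is correct and follows essentially the same route as the paper: write $\tilde{g}(t)=\psi_{t}^{*}g(t)$, obtain uniform estimates on the minimizer from the elliptic equation \eqref{minimizermu} (the paper invokes \cite{rothaus} and \cite[Lemma~3.5]{songwang10} for this), deduce that the diffeomorphisms $\psi_{t}$ and hence $\tilde{g}(t)$ extend smoothly to $t=T$ with $\tilde{g}(T)\in\U^{k,\gamma}_{\delta}$, and then restart the construction from Lemma~\ref{existence}. One small point to tighten is that the vector field generating $\psi_{t}$ is $-\nabla_{g(t)}f_{t}$, so the estimates you actually need are on $f_{t}$ relative to $g(t)$ (which the paper obtains directly from the uniform curvature, Sobolev, and $\mu$ bounds along $g(t)$), rather than on $\tilde{f}_{t}$; this avoids the slight circularity of passing from bounds on $\tilde{f}_{t}$ back to $f_{t}$ through the not-yet-controlled $\psi_{t}$.
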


\begin{proof} We first show that $\tilde{g}(t)$ extends to $[0,T]$ as a family of smooth metrics. Recall that for $t\in [0,T)$ we can obtain $\tilde{g}(t)$ as $\tilde{g}(t)=\psi^{*}_{t}g(t)$ where  $\psi_{t}$ is the one parameter family of diffeomorphisms generated by the time-dependent vector fields $\nabla f_{t}$ and $f_{t}$ is the unique smooth function that satisfies \eqref{minimizing} and \eqref{unitintegral}. In particular, the functions $f_{t}$ satisfy
\begin{align}\label{potentialeq}
\Delta_{g(t)}f_{t}-\frac{1}{2}|\nabla f_{t}|^{2}+\frac{1}{2}R_{g(t)}+\frac{1}{2\tau}(f_{t}-n)=\frac{1}{2\tau}\mu(g(t),\tau),
\end{align} 
and since $g(t)$ is a solution of \eqref{tauflow} which is defined and smooth in $[0,T]$, we have bounds of the form
\begin{align*}
\sup_{[0,T]}\|\nabla^{l}Rm_{g(t)}\|_{C^{0}_{g_{0}}}\le C(l),
\end{align*} 
where $C(l)$ is a positive constant. If we let $\Lambda_{t}$ be the Sobolev constant of $g(t)$ we also have a uniform bound of the form
\begin{align*}
\sup_{[0,T]}\Lambda_{t}\le \Lambda,
\end{align*} 
for some positive constant $\Lambda$. 
It follows from \eqref{potentialeq} and from the results in \cite[Section 1]{rothaus} or \cite[Lemma 3.5]{songwang10} that if we let $M_{l}=\sup_{[0,T)}\|Rm(g(t))\|_{C^{l}_{g_{0}}}$ for $l\ge 0$ an integer and  $\zeta=\sup_{[0,T]}\mu(g(t),\tau)$,  then we have bounds of the form
\begin{align*}
\sup_{[0,T)}\|\nabla^{l}f_{t}\|_{C^{0}_{g_{0}}}\le C(M_{l},\Lambda,\zeta),
\end{align*} 
and therefore, for the flow $\psi_{t}$ and any integer $l\ge 0$ we have estimates of the form
\begin{align*}
\sup_{[0,T)}\|\nabla^{l}\psi_{t}\|_{C^{0}_{g_{0}}}\le C^{'}(M_{l+1},\Lambda,\zeta,T),
\end{align*}
which implies that $\tilde{g}(t)=\psi^{*}_{t}g(t)$ is defined for $t=T$ since
\begin{align*}
\partial_{t}\tilde{g}(t)=-2\left(Ric(\tilde{g}(t))+\nabla^{2}\tilde{f}_{t}-\frac{1}{2\tau}\tilde{g}(t)\right)~\text{for}~0\le t<T,
\end{align*}
and $\tilde{f}_{t}$ depends real analytically on $\tilde{g}(t)$. From the condition 
\begin{align*}
\limsup_{t\nearrow T}\|\tilde{g}(t)-g_{0}\|_{C^{k,\gamma}_{g_{0}}}<\delta,
\end{align*}
 it also follows that $\tilde{g}(T)\in\U^{k,\gamma}_{\delta}$ and hence $\tilde{g}(T)$ is regular so there exists a  unique function $\tilde{f}_{T}$ satisfying \eqref{uniquef},\eqref{unitvolume} and \eqref{analytic} in Definition \ref{regularmetric}. From the proof of Lemma \ref{existence}, it follows that we can extend the canonical solution $\tilde{g}(t)$ to some interval $[0,T+\epsilon]$ while ensuring $\tilde{g}(t)\in\U^{k,\gamma}_{\delta}$ for $t\in[0,T+\epsilon]$.      
\end{proof} 
 
 We close this section by pointing out that regardless of the notion of regularity introduced above, we have the following property for the $\mu$-functional which was proved in \cite{sesum06}:
 \begin{lemma}[\cite{sesum06}]\label{crucialmu} Given the assumptions in Theorem \ref{mainthm} for $g(t)$ satisfying \eqref{tauflow}, the limit $\displaystyle{\lim_{t\rightarrow\infty}\mu(g(t),\tau)}$ exists and is finite for fixed $\tau$. In particular, if the soliton $g_{0}$ is obtained as a sequential limit of \eqref{tauflow} we have $\mu(g(t),\tau)\le\mu(g(0),\tau)$ for all $t\ge 0$ and furthermore $\displaystyle{\lim_{t\rightarrow\infty}\mu(g(t),\tau)=\mu(g_{0},\tau)}$.
 \end{lemma}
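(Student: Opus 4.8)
The plan is to prove that $t\mapsto\mu(g(t),\tau)$ is monotone and uniformly bounded, so that a finite limit exists, and then to identify this limit with $\mu(g_0,\tau)$ using the sequential convergence to a soliton from \cite{sesum06} together with the continuity of $g\mapsto\mu(g,\tau)$. The three ingredients I would assemble are: (i) monotonicity of $\mu$ along \eqref{tauflow}; (ii) a uniform bound on $\mu(g(t),\tau)$; and (iii) continuity of $\mu$ under $C^{k,\gamma}$ convergence of metrics.

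For (i) I would invoke Perelman's monotonicity of the $\mu$-functional, realized through a barrier argument that reduces everything to the smooth $\W$-monotonicity \eqref{Wmonotonicity}. Fixing times $t_0<t_1$, let $f_{t_1}$ realize the infimum defining $\mu(g(t_1),\tau)$, and solve the conjugate heat equation backward in time from $t_1$ to $t_0$ — a well-posed forward-parabolic problem for $u=(4\pi\tau)^{-n/2}e^{-f}$ — thereby producing $f_t$ for $t\in[t_0,t_1]$ which preserves the constraint $\int_M (4\pi\tau)^{-n/2}e^{-f_t}\,dV_{g(t)}=1$. Along this coupled evolution \eqref{Wmonotonicity} gives $\W(g(t_0),f_{t_0},\tau)\le\W(g(t_1),f_{t_1},\tau)=\mu(g(t_1),\tau)$, while $\mu(g(t_0),\tau)\le\W(g(t_0),f_{t_0},\tau)$ because $f_{t_0}$ is an admissible competitor. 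Hence $\mu(g(t_0),\tau)\le\mu(g(t_1),\tau)$, i.e. $\mu(g(\cdot),\tau)$ is nondecreasing. The diffeomorphism invariance of $\mu$ lets one pass freely between \eqref{tauflow} and the gradient form of the flow underlying \eqref{Wmonotonicity}.

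For (ii) and (iii) I would use the sequential convergence established in \cite{sesum06}: under \eqref{exists}, \eqref{Rmbounded}, \eqref{diameter} there is a sequence $t_i\nearrow\infty$ and diffeomorphisms $\phi_i$ with $\phi_i^*g(t_i)\to g_0$ in $C^{k,\gamma}$. Continuity of $\mu$ — which follows from the uniform (logarithmic) Sobolev inequality available under the bounded-geometry hypotheses, together with elliptic estimates for the minimizer equation \eqref{minimizermu} — and the diffeomorphism invariance of $\mu$ give $\mu(g(t_i),\tau)=\mu(\phi_i^*g(t_i),\tau)\to\mu(g_0,\tau)$. Since $\mu(g(\cdot),\tau)$ is nondecreasing, its limit as $t\to\infty$ exists, equals $\sup_{t\ge0}\mu(g(t),\tau)$, and is attained as the limit along $\{t_i\}$; therefore $\lim_{t\to\infty}\mu(g(t),\tau)=\mu(g_0,\tau)<\infty$ and $\mu(g(t),\tau)\le\mu(g_0,\tau)$ for all $t$. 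As an aside, a uniform upper bound independent of the sequential limit can also be extracted by testing $\W(g(t),\cdot,\tau)$ against the constant function normalized to the volume constraint, whose value is controlled by the average scalar curvature and the volume, both uniformly bounded via \eqref{Rmbounded}, \eqref{diameter} and Perelman's no-local-collapsing.

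The main obstacle is the rigorous justification of the monotonicity in (i): because $\mu$ is defined as an infimum and the minimizer $f_t$ is not known to be unique or to depend differentiably on $t$ for metrics far from a soliton — the regularity of Definition \ref{regularmetric} being guaranteed only near $g_0$ — one cannot simply differentiate $\mu(g(t),\tau)$ directly. The backward conjugate-heat barrier above is precisely what circumvents this, trading the differentiation of an infimum for the smooth, unconditional monotonicity \eqref{Wmonotonicity} of $\W$ along a coupled flow.
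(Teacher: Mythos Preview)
Your proposal is correct and follows exactly the route the paper indicates: the paper's own proof is simply a reference to \cite{sesum06} (Claim~3.5 and Lemma~3.6 there), stating that the argument rests on ``a continuity property of $\mu(\cdot,\tau)$ and the monotonicity of the $\W$-functional along the Ricci flow,'' which is precisely your (i) and (iii). Your backward conjugate-heat barrier for (i) is the standard way to pass from $\W$-monotonicity to $\mu$-monotonicity without assuming regularity of the minimizer, and your identification of the limit via sequential $C^{k,\gamma}$ convergence and diffeomorphism invariance is what \cite{sesum06} does; the inequality you obtain, $\mu(g(t),\tau)\le\mu(g_{0},\tau)$, is the one actually used later in the proof of Theorem~\ref{mainthm}.
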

 
 \begin{proof}
 See Claim 3.5 and Lemma 3.6 in \cite{sesum06}. The proof is based on a continuity property of $\mu(\cdot,\tau)$ and the monotonicity of the $\W$-functional along the Ricci flow.  \end{proof}

 \section{Stability, convergence and uniqueness}\label{uniqueness}
 
 In this section we prove our main theorem  using a {\L}ojasiewicz-Simon inequality which we now state
 
 \begin{lemma}\label{lemmaloja} Let $g_{0}$ satisfy \eqref{solitoneq1}, then there exists a number $0<\alpha<1$ and a regular neighborhood $\U$ of $g_{0}$ such that for any $g\in\U$ the following inequality holds
 \begin{align}\label{loja}
 \left|\mu(g_{0},\tau)-\mu(g,\tau)\right|^{1-\alpha/2}\le C\|\nabla\mu(g)\|_{L^{2}_{g}(M)},
 \end{align} 
 where $\nabla\mu(g)$ is given by \eqref{gradmufunctional}, $C$ is a positive constant (independent of $g$) and where $L^{2}_{g}(M)$ denotes the $L^{2}$ norm on $(0,2)$-tensors given by
 \begin{align*}
 \|h\|^{2}_{L^{2}_{g}(M)}=\int_{M}\|h\|_{g}^{2}(4\pi\tau)^{-n/2}e^{-f}dV_{g},
 \end{align*}
where $f$ satisfies \eqref{uniquef}, \eqref{unitvolume}, \eqref{analytic} in Definition \ref{regularmetric} and $\|\cdot\|_{g}$ is the standard norm on $(0,2)$ tensors induced by the metric $g$
\end{lemma}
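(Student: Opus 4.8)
The plan is to adapt Leon Simon's Lyapunov--Schmidt argument from \cite{ls83}, in the second-order formulation of \cite{coldmini2012}, dealing first with the diffeomorphism invariance of $\mu$ by fixing a gauge. First I would reduce \eqref{loja} to metrics lying in a slice transverse to the diffeomorphism orbit through $g_{0}$. Both $\mu$ and the gradient $\nabla\mu$ in \eqref{gradmufunctional} are invariant under the diffeomorphism group acting simultaneously on $g$ and on the associated minimizer $f=P(g)$; consequently the second variation of $\mu$ at $g_{0}$ annihilates every Lie-derivative direction $\lie_{X}g_{0}$, so it can never be elliptic on all of $S^{2}(T^{*}M)$. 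Using the gauge-fixing lemma (Lemma \ref{gaugefixing}) I would produce a slice $\s\subset\M(M)$ through $g_{0}$ such that every $g$ near $g_{0}$ can be written $g=\phi^{*}\tilde{g}$ with $\tilde{g}\in\s$ and $\phi$ close to the identity. Since $\mu(\phi^{*}\tilde{g},\tau)=\mu(\tilde{g},\tau)$, $P(\phi^{*}\tilde{g})=\phi^{*}P(\tilde{g})$, and the weighted measure $d\nu$ is diffeomorphism invariant, one checks that $\|\nabla\mu(g)\|_{L^{2}_{g}(M)}=\|\nabla\mu(\tilde{g})\|_{L^{2}_{\tilde{g}}(M)}$, so it suffices to establish \eqref{loja} for $g\in\s$.

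Next I would record the two analytic inputs. Since $g_{0}$ is regular (Lemma \ref{lemregular}), the minimizer map $P$ is real analytic in the sense of Remark \ref{meaninganalytic}, hence $g\mapsto\nabla\mu(g)=-\tau(Ric(g)+\nabla^{2}P(g)-\frac{1}{2\tau}g)$ is a real-analytic map from the regular neighborhood into $S^{2}(T^{*}M)$. Restricting to the slice $\s$, I would compute the second variation $L:=(\nabla\mu)^{'}_{g_{0}}$ at $g_{0}$; this is exactly the calculation carried out in Appendix \ref{loj}, where $L$ is shown to be a self-adjoint, second-order \emph{elliptic} operator with respect to the weighted inner product $d\nu_{0}=(4\pi\tau)^{-n/2}e^{-f_{0}}dV_{g_{0}}$. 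In particular $L$ is Fredholm, its kernel $\mathcal{K}=\ker L$ is finite dimensional, and $L$ is boundedly invertible on the $L^{2}_{g_{0}}$-orthogonal complement of $\mathcal{K}$.

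With these in hand the Lyapunov--Schmidt reduction proceeds as in \cite{coldmini2012}. Writing $g=g_{0}+h$ with $h\in\s$ and letting $\Pi$ denote the $L^{2}_{g_{0}}$-projection onto $\mathcal{K}$, I would use the analytic implicit function theorem to solve the $(I-\Pi)$-component of the gradient equation, expressing $(I-\Pi)h$ as a real-analytic function of the finite-dimensional variable $\Pi h\in\mathcal{K}$. This reduces $\mu$ to a real-analytic function $\Theta$ on the finite-dimensional space $\mathcal{K}$ with a critical point at the origin, to which the classical {\L}ojasiewicz inequality \eqref{finiteloja} applies; carrying the elliptic and projection estimates back through the reduction then yields \eqref{loja} with exponent $\alpha$ inherited from $\Theta$. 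Note that this route avoids any integrability hypothesis, since $\Theta$ need not be constant on $\mathcal{K}$.

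The hard part will be two intertwined points. The first is that $\nabla\mu$ is \emph{second} order in the metric, so Simon's original framework \eqref{infiniteloja} (which governs first-order functionals) does not apply directly; this forces the use of the higher-order abstract {\L}ojasiewicz--Simon theorem of \cite{coldmini2012}, whose hypotheses are precisely analyticity of $\nabla\mu$ and the Fredholm/elliptic property of $L$ supplied above. The second is controlling the implicitly defined minimizer $f=P(g)$ throughout: one must verify that its analytic dependence on $g$, together with the gauge fixing, is compatible with the weighted norms and elliptic estimates so that no derivatives are lost in the reduction. The remaining steps are the routine bookkeeping of constants and exponents.
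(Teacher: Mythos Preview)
Your proposal is correct and follows essentially the same route as the paper: the proof given here defers to Appendix~\ref{loj}, where one fixes the gauge via Lemma~\ref{gaugefixing} to land on the slice $S^{2}_{*}=\{h:\delta^{f_{0}}_{g_{0}}h=0\}$, computes the second variation $L$ of $\mu$ at $g_{0}$ (Lemma~\ref{secondvar}) and verifies it is elliptic and self-adjoint on that slice, then applies the abstract second-order {\L}ojasiewicz--Simon theorem of \cite{coldmini2012} to the composed functional $G(h)=\mu(E_{g_{0}}(h),\tau)$. Your explicit description of the Lyapunov--Schmidt reduction is precisely what the Colding--Minicozzi machinery does internally, so the two arguments coincide.
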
 

\begin{proof} See \cite[Lemma 3.1]{songwang10} or Appendix \ref{loj}.
\end{proof}

\begin{remark}\label{equivalent} \em{Since $g$ is in a regular neighborhood $\U$ of $g_{0}$, by assuming that $\U$ is small enough we conclude that all norms $\|\cdot\|_{L^{2}_{g}(M)}$ are equivalent, so inequality \eqref{loja} holds even if we replace the norm $\|\cdot\|_{L^{2}_{g}(M)}$ with the norm $\|\cdot\|_{L^{2}_{g_{0}}(M)}$ possibly with a different multiplicative constant $C$ which is uniformly bounded for metrics in $\U$.
}
\end{remark}
 
Recall the notation introduced in \eqref{holderneighborhood}. We will assume that $\delta$ is small enough so that $\U^{k,\gamma}_{\delta}$ is a regular neighborhood of $g_{0}$. For simplicity, along the modified Ricci flow we will use $L^{2}_{t}(M)$ instead of $L^{2}_{\tilde{g}(t)}(M)$ and we will write $C^{k,\gamma}_{t}(M)$ to denote $C^{k,\gamma}_{\tilde{g}(t)}(M)$. If $l$ is a positive integer, we will write $\|\cdot\|_{W^{l,2}_{t}(M)}$ to denote the Sobolev norm given by
\begin{align*}
\|h\|_{W^{l,2}_{t}(M)}=\sum_{j=0}^{l}\|\nabla^{j}h\|_{L^{2}_{t}(M)}.
\end{align*}

The next Lemma is a standard stability inequality for solutions of gradient flow equations which was originally derived in \cite{ls83}. More recently, a similar estimate was used in \cite{schulze11} to prove uniqueness of compact tangent flows in the mean curvature flow. We include its proof for convenience.   

\begin{lemma}[Stability Inequality]\label{stability} Let $\tilde{g}(t)$ be a canonical solution of the modified Ricci flow on $[t_{0},t_{1}]$ such that $\tilde{g}(t)\in\U^{k,\gamma}_{\delta}$ (a regular neighborhood of $g_{0}$) with $\mu(\tilde{g}(t))\le\mu(g_{0})$ for all $t\in[t_{0},t_{1}]$, then there exists $\theta>0$ such that for any $t_{0}\le\tau_{0}\le\tau_{1}\le t_{1}$ we have the inequality
\begin{align*}
\|\tilde{g}(\tau_{1})-\tilde{g}(\tau_{0})\|_{C^{k,\gamma}_{g_{0}}(M)}\le C(\mu(g_{0})-\mu(\tilde{g}(t_{0})))^{\theta}.
\end{align*}
 \end{lemma}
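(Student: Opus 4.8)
The plan is to run the standard {\L}ojasiewicz--Simon integration argument of \cite{ls83}, adapted to the modified Ricci flow, and then upgrade the resulting $L^{2}$ estimate to a $C^{k,\gamma}$ estimate by interpolation. First I would rewrite the flow in gradient form: by \eqref{gradmufunctional} the modified Ricci flow \eqref{ModRF} reads $\partial_{t}\tilde{g}(t)=\frac{2}{\tau}\nabla\mu(\tilde{g}(t))$, so the monotonicity identity above becomes
\begin{align*}
\frac{d}{dt}\mu(\tilde{g}(t),\tau)=\frac{2}{\tau}\|\nabla\mu(\tilde{g}(t))\|^{2}_{L^{2}_{t}(M)}, \qquad \|\partial_{t}\tilde{g}(t)\|_{L^{2}_{t}(M)}=\frac{2}{\tau}\|\nabla\mu(\tilde{g}(t))\|_{L^{2}_{t}(M)}.
\end{align*}
Since $\mu(\tilde{g}(t))\le\mu(g_{0})$ throughout $[t_{0},t_{1}]$, I would introduce the nonnegative, nonincreasing quantity $\Phi(t)=\big(\mu(g_{0})-\mu(\tilde{g}(t))\big)^{\alpha/2}$, where $\alpha\in(0,1)$ is the exponent from Lemma \ref{lemmaloja}.

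Differentiating $\Phi$ and using the two displayed identities gives $-\dot\Phi(t)=\frac{\alpha}{\tau}\big(\mu(g_{0})-\mu(\tilde{g}(t))\big)^{\alpha/2-1}\|\nabla\mu(\tilde{g}(t))\|_{L^{2}_{t}}^{2}$. The {\L}ojasiewicz--Simon inequality \eqref{loja} (applicable since $\tilde{g}(t)\in\U^{k,\gamma}_{\delta}$) bounds $\big(\mu(g_{0})-\mu(\tilde{g}(t))\big)^{1-\alpha/2}\le C\|\nabla\mu(\tilde{g}(t))\|_{L^{2}_{t}}$, hence $\big(\mu(g_{0})-\mu(\tilde{g}(t))\big)^{\alpha/2-1}\ge C^{-1}\|\nabla\mu(\tilde{g}(t))\|_{L^{2}_{t}}^{-1}$; substituting yields $-\dot\Phi(t)\ge \frac{\alpha}{\tau C}\|\nabla\mu(\tilde{g}(t))\|_{L^{2}_{t}}=\frac{\alpha}{2C}\|\partial_{t}\tilde{g}(t)\|_{L^{2}_{t}}$. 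Integrating in $t$ from $\tau_{0}$ to $\tau_{1}$ and using the monotonicity of $\mu(\tilde{g}(t))$ to replace $\Phi(\tau_{0})$ by $\Phi(t_{0})$, I obtain
\begin{align*}
\int_{\tau_{0}}^{\tau_{1}}\|\partial_{t}\tilde{g}(t)\|_{L^{2}_{t}}\,dt\le \frac{2C}{\alpha}\big(\mu(g_{0})-\mu(\tilde{g}(t_{0}))\big)^{\alpha/2}.
\end{align*}
Since by Remark \ref{equivalent} the norms $\|\cdot\|_{L^{2}_{t}}$ are uniformly equivalent to $\|\cdot\|_{L^{2}_{g_{0}}}$ on $\U^{k,\gamma}_{\delta}$, the fundamental theorem of calculus $\tilde{g}(\tau_{1})-\tilde{g}(\tau_{0})=\int_{\tau_{0}}^{\tau_{1}}\partial_{t}\tilde{g}(t)\,dt$ then gives $\|\tilde{g}(\tau_{1})-\tilde{g}(\tau_{0})\|_{L^{2}_{g_{0}}}\le C'\big(\mu(g_{0})-\mu(\tilde{g}(t_{0}))\big)^{\alpha/2}$.

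The remaining and main difficulty is to upgrade this $L^{2}$ bound to the claimed $C^{k,\gamma}_{g_{0}}$ bound. For this I would first record that $\{\tilde{g}(t)\}$ enjoys uniform bounds in a higher norm $C^{m}_{g_{0}}$ for any $m>k+\gamma$: the curvature is uniformly bounded by hypothesis \eqref{Rmbounded}, Shi-type smoothing estimates give uniform bounds on all $\|\nabla^{l}Rm\|_{C^{0}_{g_{0}}}$, and---exactly as in the proof of Lemma \ref{criterion}, via \eqref{potentialeq} and the estimates of \cite{rothaus} and \cite{songwang10}---the potentials $\tilde{f}_{t}$ and hence the generating diffeomorphisms are uniformly controlled, so $\tilde{g}(\tau_{1})-\tilde{g}(\tau_{0})$ is bounded in $C^{m}_{g_{0}}$ independently of $\tau_{0},\tau_{1}$. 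I would then apply a standard interpolation inequality $\|h\|_{C^{k,\gamma}_{g_{0}}}\le C\|h\|_{L^{2}_{g_{0}}}^{1-\sigma}\|h\|_{C^{m}_{g_{0}}}^{\sigma}$ for a suitable $\sigma\in(0,1)$, which combined with the two previous bounds yields
\begin{align*}
\|\tilde{g}(\tau_{1})-\tilde{g}(\tau_{0})\|_{C^{k,\gamma}_{g_{0}}}\le C\big(\mu(g_{0})-\mu(\tilde{g}(t_{0}))\big)^{(1-\sigma)\alpha/2},
\end{align*}
so the lemma holds with $\theta=(1-\sigma)\alpha/2>0$. The genuinely delicate points are securing the uniform higher-order bounds on $\tilde{g}(t)$ (tracking the loss of derivatives in passing from \eqref{tauflow} to the modified flow) and checking that the interpolation and equivalence-of-norms constants can be taken uniform over the regular neighborhood; everything else is the mechanical Simon computation above.
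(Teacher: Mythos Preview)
Your proposal is correct and follows the standard {\L}ojasiewicz--Simon scheme; the paper's proof differs only in the order of the two main operations. Rather than integrating the $L^{2}$ velocity first (with the exponent $\alpha/2$) and then interpolating on the difference $\tilde g(\tau_{1})-\tilde g(\tau_{0})$ as you do, the paper interpolates \emph{pointwise in time} on the velocity $\partial_{t}\tilde g$, obtaining $\|\partial_{t}\tilde g\|_{C^{k,\gamma}_{t}}\le C\|\partial_{t}\tilde g\|_{L^{2}_{t}}^{\eta}$ from uniform high-order bounds on $Ric(\tilde g)+\nabla^{2}\tilde f-\tfrac{1}{2\tau}\tilde g$, and then integrates the $C^{k,\gamma}$ norm directly by differentiating $(\mu(g_{0})-\mu(\tilde g(t)))^{\theta}$ with the specific choice $\theta=\eta+\alpha-1-\tfrac{\alpha\eta}{2}$ (equivalently $1-\theta=(2-\eta)(1-\alpha/2)$), which makes the {\L}ojasiewicz inequality yield exactly $-\tfrac{d}{dt}(\mu(g_{0})-\mu(\tilde g))^{\theta}\ge c\|\nabla\mu\|_{L^{2}_{t}}^{\eta}$. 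Both routes consume the same analytic input---Shi-type curvature bounds and elliptic estimates on the potential $\tilde f_{t}$ to control the high derivatives---so the ``delicate points'' you flag are precisely the ones the paper also relies on. Your route is more modular, cleanly separating the gradient-flow/{\L}ojasiewicz step from the smoothing step, whereas the paper's route entangles them but yields the $C^{k,\gamma}$ bound in a single integration without a second interpolation at the end.
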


\begin{proof} 
For the rest of the proof $C$ will denote a positive constant that may change from line to line and which depends only on $g_{0}$, $\tau$, $k$, $\gamma$, on bounds on the curvature tensor and the diameter along \eqref{tauflow}, on the parameter $0<\alpha<1$ in Lemma \ref{lemmaloja} and the multiplicative constant in inequality \eqref{loja}.  We will also indicate the dependence of $C$ on additional parameters that may appear, for example Sobolev constants (while keeping the dependence on the aforementioned parameters).  Let $m$ be a positive integer satisfying
\begin{align}\label{holderparameter}
\frac{1}{2}\le\frac{m-k-\gamma}{n}.
\end{align}
By the interpolation inequality for tensors (see for example \cite{ham82}), there exists $\eta\in(\frac{1-\alpha}{1-\alpha/2},1)$ where $\alpha$ is as in the {\L}ojasiewicz-Simon inequality in Lemma \ref{lemmaloja},  such that
\begin{align*}
\|\partial_{t}\tilde{g}(t)\|_{W^{m,2}_{t}}\le C(\eta,m)\|\partial_{t}\tilde{g}(t)\|_{L^{2}_{t}}^{\eta}\cdot\|\partial_{t}\tilde{g}(t)\|^{1-\eta}_{W^{m^{'},2}_{t}},
\end{align*}
for some positive integer $m^{'}$. Note that we have
\begin{align*}
\|\partial_{t}\tilde{g}(t)\|^{1-\eta}_{W^{m^{'},2}_{t}}=2\|Ric(\tilde{g}(t))+\nabla^{2}f(t)-\frac{1}{2\tau}\tilde{g}(t)\|^{1-\eta}_{W^{m^{'},2}_{t}},
\end{align*}
but clearly, if $\delta>0$ is sufficiently small, then for all $t\in[0,T^{*}]$ we have
\begin{align}
\|\nabla^{2}\tilde{f}(t)\|_{W^{m^{'},2}_{t}} &\le C_{1}(m^{'})\label{regularbound}.
\end{align}
Observe that \eqref{regularbound} is a consequence of the fact that the metrics $\tilde{g}(t)$ are regular as in Definition \ref{regularmetric}. On the other hand and again by the estimates in \cite{ham82} we also have bounds of the form
\begin{align}
\|Rm(\tilde{g}(t))\|_{C^{m^{'}}_{t}}&\le C_{2}(m^{'}),\label{curvaturebound}
\end{align}
which are diffeomorphism-invariant. We then have
\begin{align*}
\|\partial_{t}\tilde{g}(t)\|_{W^{m,2}_{t}}\le C(m^{'})\|\partial_{t}\tilde{g}(t)\|^{\eta}_{L^{2}_{t}}.
\end{align*}
Our choice of the integer $m$ in \eqref{holderparameter} is such that we have
\begin{align}\label{estimateholder}
\|\partial_{t}\tilde{g}(t)\|_{C^{k,\gamma}_{t}}\le C\|\partial\tilde{g}(t)\|_{L^{2}_{t}}^{\eta},
\end{align}
where $C$ is a positive constant depending on $m^{'}$ and Sobolev constants (which are uniformly bounded in $[t_{0},t_{1}]$ since all metrics $\tilde{g}(t)$ are in $\U^{k,\gamma}_{\delta}$). Inequality \eqref{estimateholder} together with the {\L}ojasiewicz-Simon inequality in \ref{loja} will allow us to estimate the $C^{k,\gamma}$ norm of the difference $\tilde{g}(t)-\tilde{g}(t^{'})$ for $t,t^{'}\in [T_{0},T_{1}]$.  If we put 
\begin{align*}
\theta=-1+\eta+\alpha-\frac{\alpha\eta}{2}~\text{(note that}~\theta>0,
\end{align*}
then  for any $t\in(t_{0},t_{1})$ we have by Lemma \ref{lemmaloja} the differential inequality
\begin{align*}
\frac{d}{dt}(\mu(g(0),\tau)-\mu(\tilde{g}(t),\tau))^{\theta}\ge -C\theta\|\nabla\mu\|^{\eta}_{L^{2}_{t}(M)}, 
\end{align*}
and therefore, for any $\tau_{0}$,$\tau_{1}$ with $t_{0}\le \tau_{0}\le\tau_{1}\le t_{1}$ we obtain
\begin{align*}
&\|\tilde{g}(\tau_{1})-\tilde{g}(\tau_{0})\|_{C^{k,\gamma}_{t}}\le -(C\theta)^{-1}\int_{\tau_{0}}^{\tau_{1}}\frac{d}{ds}(\mu(g_{0})-\mu(\tilde{g}(s)))^{\theta}ds\\
&=(C\theta)^{-1}\left[\left(\mu(g_{0})-\mu(\tilde{g}(\tau_{0}))\right)^{\theta}-(\mu(g_{0})-\mu(\tilde{g}(\tau_{1})^{\theta}\right]\\
&\le(C\theta)^{-1}\left(\mu(g_{0})-\mu(\tilde{g}(t_{0}))\right)^{\theta}.
\end{align*}
Since the norms $\|\cdot\|_{C^{k,\gamma}_{t}(M)}$ and $\|\cdot\|_{C^{k,\gamma}_{g_{0}}(M)}$ are equivalent by Remark \ref{equivalent}, the Lemma follows.
\end{proof}

We are now ready to prove Theorem \ref{mainthm} as outlined in Section \ref{outline} of the introduction.

\begin{proof}[Proof Theorem \ref{mainthm}]Fix a regular neighborhood $\U^{k,\gamma}_{\delta}$ of $g_{0}$. Recall that from the results in \cite{sesum06} we can consider a sequence $t_{i}$ with $t_{i}\nearrow\infty$ as $i\rightarrow\infty$ such that $g(t_{i})\rightarrow g_{0}$ in $C^{k+4,\gamma}_{g(t_{i})}$, where $g(t)$ is a solution of \eqref{tauflow} which exists for all time. Fix $\epsilon>0$ and let $i_{0}$ be a positive integer large enough so that $g(t_{i})\in \U^{k+4,\gamma}_{\delta/8}$ for all $i\ge i_{0}$ and such that for $t\ge t_{i_{0}}$ we have $0\le \mu(g_{0},\tau)-\mu(g(t),\tau)\le \epsilon$  (this last condition is guaranteed by Lemma \ref{crucialmu}).  Let $T=t_{i_{0}}$, since $g(T)\in \U^{k+4,\gamma}_{\delta/8}$ is regular, we can consider the canonical solution $\tilde{g}(s)$ of the  modified Ricci flow \eqref{ModRF} with $\tilde{g}(0)=g(T)$. Consider an interval $[0,T_{0}]$ such that for all $s\in[0,T_{0}]$ the metrics $\tilde{g}(s)$  satisfy $\tilde{g}(s)\in\U^{k,\gamma}_{\delta/8}$ (see Lemma \ref{existence}). Note that for any $s\in[0,T_{0}]$, there exists a $C^{\infty}$ diffeomorphism $\phi_{s}:M\rightarrow M$ such that $\phi^{*}_{s}\tilde{g}(s)=g(T+s)$, and therefore by Lemma \ref{crucialmu} we have
\begin{align*}
 \mu(\tilde{g}(s),\tau)=\mu(g(T+s),\tau)\le \mu(g_{0},\tau).
 \end{align*}
 In particular, we have that $\mu(\tilde{g}(s),\tau)\le\mu(g_{0},\tau)$ for any time $s$ such that $\tilde{g}(s)$ is defined, and the difference $\mu(g_{0},\tau)-\mu(\tilde{g}(s),\tau)$ is nonincreasing in $s$. Let now $T^{*}>T_{0}$ be such that the solution $\tilde{g}(s)$ exists in $[0,T^{*}]$ and satisfies $\tilde{g}(s)\in\U^{k,\gamma}_{\delta/2}$. By Lemma \ref{stability}, for any $s\in[T_{0},T^{*}]$ we have 
\begin{align*}
\|\tilde{g}(s)-\tilde{g}(T_{0})\|_{C^{k,\gamma}_{g_{0}}}\le C\left[(\mu(g_{0},\tau)-\mu(\tilde{g}(s),\tau)\right]^{\theta}\le C(\epsilon)^{\theta},
\end{align*}
so we can choose $\epsilon$ to have $\|\tilde{g}(s)-\tilde{g}(T_{0})\|_{C^{k,\gamma}_{g_{0}}}\le \frac{\delta}{4}$. Let $s\in [0,T^{*}]$, if $s\in [0,T_{0}]$ then $\|\tilde{g}(s)-g_{0}\|_{C^{k,\gamma}_{g_{0}}}\le \delta/8$ by definition of $T_{0}$ and if $s\in [T_{0},T^{*}]$ we obtain
\begin{align}\label{threeeighths}
\|\tilde{g}(s)-g_{0}\|_{C^{k,\gamma}_{g_{0}}}\le\|\tilde{g}(s)-\tilde{g}(T_{0})\|_{C^{k,\gamma}_{g_{0}}}+\|\tilde{g}(T_{0})-g_{0}\|_{C^{k,\gamma}_{g_{0}}}\le \frac{\delta}{4}+\frac{\delta}{8}=\frac{3\delta}{8}.
\end{align}
Since $T^{*}$ is any number with  $T^{*}>T_{0}$ such that $\tilde{g}(s)\in\U^{k,\gamma}_{\delta/2}$ for $s\in [0,T^{*}]$, it follows that the canonical solution of the modified Ricci flow with initial condition $\tilde{g}(0)=g(T)$  exists for all $s\ge 0$ and $\tilde{g}(s)\in\U^{k,\gamma}_{\delta/2}$, because otherwise  the number
\begin{align*}
s_{0}=\sup\{s: \tilde{g}(\sigma)\in \U^{k,\gamma}_{\delta/2}~\text{for all}~\sigma\in[0,s]\},
\end{align*}
is finite and then from \eqref{threeeighths} we must have
\begin{align}
\limsup_{s\nearrow\ s_{0}}\|\tilde{g}(s)-g_{0}\|_{C^{k,\gamma}_{g_{0}}}<\frac{\delta}{2},
\end{align}
but then from Lemma \ref{criterion}, we can extend $\tilde{g}(s)$ to  an interval of the form $[0,s_{0}+\epsilon]$ for some $\epsilon>0$ and such that $\tilde{g}(s)\in\U^{k,\gamma}_{\delta/2}$ for all $s$ in $[0,s_{0}+\epsilon]$ which contradicts the maximality of $s_{0}$. We now know that the flow $\tilde{g}(s)$ exists for all $s\ge 0$ and therefore for any $s>0$ we have
\begin{align*}
\frac{d}{ds}\left(\mu(g_{0},\tau)-\mu(\tilde{g}(s),\tau)\right)^{\alpha-1}&=C(1-\alpha)\|\nabla\mu(\tilde{g}(s),\tau)\|^{2}_{L^{2}_{s}(M)}\left(\mu(g_{0},\tau)-\mu(\tilde{g}(s),\tau)\right)^{\alpha-2}\\
&\ge C(1-\alpha),
\end{align*}
where $C$ is the positive constant appearing in \eqref{loja}. We obtain for $s>0$ the inequality
\begin{align*}
\mu(g_{0},\tau)-\mu(\tilde{g}(s),\tau)\le Cs^{-\frac{1}{1-\alpha}}.
\end{align*}

From Lemma \ref{stability} it follows that for any $s,s^{'}>0$ we must have
\begin{align*}
\|\tilde{g}(s)-\tilde{g}(s^{'})\|_{C^{k,\gamma}_{g_{0}}}\le Cs^{-\frac{\theta}{1-\alpha}},
\end{align*}
where $\theta>0$ and from the equivalence of the norms $\|\cdot\|_{C^{k,\gamma}_{s}}$ and $\|\cdot\|_{C^{k,\gamma}_{g_{0}}}$ we conclude the existence of a metric $g_{\infty}\in\U^{k,\gamma}_{\delta/2}$ such that 
\begin{align*}
\|\tilde{g}(s)-g_{\infty}\|_{C^{k,\gamma}_{s}}\le C s^{-\frac{\theta}{1-\alpha}},~s>0.
\end{align*}
The metric $g_{\infty}$ is a shrinking gradient Ricci soliton since $\nabla\mu(g_{\infty})=0$. By construction and as pointed out at the beginning of the proof, there exists a diffeomorphism $\phi_{s}$ such that $\phi^{*}_{s}\tilde{g}(s)=g(T+s)$ where $g(t)$ is the solution of \eqref{tauflow} (which exists for all time by assumption) , so if for $t>T$ we write $t=T+s$ and $\phi^{'}_{t}=(\phi_{t-T})^{-1}$ we have
\begin{align*}
\|(\phi^{'})^{*}_{t}g(t)-g_{\infty}\|_{C^{k,\gamma}_{t}}\le C(t-T)^{-\frac{\theta}{1-\alpha}},~t\rightarrow\infty,
\end{align*}
and therefore
\begin{align*}
\|g(t)-\phi_{t-T}^{*}g_{\infty}\|_{C^{k,\gamma}_{g(t)}(M)}\le C(t-T)^{-\frac{\theta}{1-\alpha}},~t\rightarrow\infty,
\end{align*}
Since $\|g(t_{i})-g_{0}\|_{C^{k,\gamma}_{g(t_{i})}(M)}\rightarrow 0$ as $i\rightarrow\infty$, we conclude that
\begin{align*}
\lim_{i\rightarrow\infty}\|g_{0}-\phi^{*}_{t_{i}-T}g_{\infty}\|_{C^{k,\gamma}_{g(t_{i})}(M)}=0.
\end{align*}
It follows from the results in \cite[Chapter 10, Sections 3.2, 3.3, 3.4]{petersenbk} that there exists a diffeomorphism $\varphi:M\rightarrow M$ such that  $\varphi^{*}g_{\infty}=g_{0}$ which completes the proof.
\end{proof}

\appendix

\section{A note on the {\L}ojasiewicz-Simon inequality for the $\mu$-functional}\label{loj}

Even though a prove of the  {\L}ojasiewicz-Simon inequality \eqref{loja} was given in \cite{songwang10}, we include this section for showing an alternative way to compute the second variation of the $\mu$-functional at a shrinking gradient Ricci soliton and to relate \eqref{loja} to a more general {\L}ojasiewicz-Simon inequality proved in \cite{coldmini2012}. 
\subsection{Second Variation of the $\mu$-functional} 

In this subsection we compute the second variation of the $\mu$-functional at a shrinking gradient Ricci soliton. Recall that the metric $g_{0}$ is regular, and therefore there exists a regular $C^{k,\gamma}$ neighborhood $\U$ of $g_{0}$. For any $g\in\U$ let $P(g)$ denote the function in 
Convention \ref{conventionP}. As we will see below, it will be necessary for us to compute the linearization $P_{g_{0}}^{'}(h)$ for any $h\in S^{2}(T^{*}M)$. We introduce for this purpose the following notation used in \cite{songwang10}: let $(M,g)$ be a closed Riemannian manifold, let $f$ be a smooth function on $M$ and let $\nabla$ denote the covariant derivative with respect to $g$ . We define $(\nabla^{*})^{f}$ to the the dual of $\nabla$ with respect to the measure $e^{-f}dV_{g}$. In the same way,  we define $(d^{*})^{f}$ to be the dual of $d$ with respect to  the measure $e^{-f}dV_{g}$.  We also define the operators
\begin{align}
\Delta_{g}^{f}&=-(\nabla^{*})^{f}\nabla,\label{roughlaplacian}\\
\Delta^{f}_{H}&=d(d^{*})^{f}+(d^{*})^{f}d.\label{hodgelaplacian}
\end{align}
Note that the operators \eqref{roughlaplacian} \eqref{hodgelaplacian} can be defined on $p$-forms for any integer $p\ge 0$. A simple computation shows that if $\varphi$ is a smooth function then
\begin{align}\label{flaplacian}
\Delta_{g}^{f}\varphi=\Delta_{g}\varphi-\langle d\varphi,df\rangle_{g}.
\end{align}

Using a Weitzenb\"{o}ck-type formula relating $\Delta^{f}$ and $\Delta_{H}$ on 1-forms which was shown in \cite{perelman1} and integration by parts one can prove
\begin{lemma}\label{eigenvalue} If $g_{0}$ satisfies \eqref{solitoneq1} and $M$ is compact, the least positive eigenvalue of $-\Delta_{g_{0}}^{f_{0}}$ on functions is strictly greater than $\frac{1}{2\tau}$.
\footnote{This eigenvalue estimate is crucial in \cite{songwang10}  to prove Lemma \ref{lemregular} by means of the implicit function theorem. If $M$ is non-compact, then the least positive eigenvalue of $-\Delta^{f}_{g}$ is $\frac{1}{2\tau}$ if and only if $M$ splits off a line in which case eigenfunctions corresponding to $\frac{1}{2\tau}$ are linear functions as explained in \cite[Section 1.2]{heinnaber}.}  
\end{lemma}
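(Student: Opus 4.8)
The plan is to turn the eigenvalue estimate into an integrated Bochner (Weitzenb\"ock) identity for the drift Laplacian, use the soliton equation \eqref{solitoneq1} to collapse the Bakry--\'Emery curvature term into the constant $\frac{1}{2\tau}$, and then exploit compactness of $M$ to upgrade the resulting inequality $\lambda_{1}\ge\frac{1}{2\tau}$ to a strict one.

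First I would record the spectral setup. Since $M$ is compact, $-\Delta^{f_0}_{g_0}$ is a nonnegative, self-adjoint, elliptic operator on $L^{2}(e^{-f_0}dV_{g_0})$ with discrete spectrum $0=\lambda_0<\lambda_1\le\lambda_2\le\cdots$, whose kernel consists of the constants. Let $\lambda_1$ be the least positive eigenvalue and let $u$ be a corresponding (necessarily nonconstant) eigenfunction, so that $-\Delta^{f_0}_{g_0}u=\lambda_1 u$; equivalently, by \eqref{flaplacian} and the definition \eqref{roughlaplacian}, $(d^{*})^{f_0}du=\lambda_1 u$. Next I would apply the weighted Weitzenb\"ock formula alluded to in the statement to the exact $1$-form $\omega=du$. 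In the paper's sign conventions it reads
\[
\Delta^{f_0}_{H}\omega=(\nabla^{*})^{f_0}\nabla\omega+\big(Ric(g_0)+\nabla^{2}f_0\big)(\omega),
\]
the curvature term being exactly the Bakry--\'Emery Ricci tensor applied to $\omega$; by \eqref{solitoneq1} it equals $\frac{1}{2\tau}\omega$. On the other hand, since $d\,du=0$,
\[
\Delta^{f_0}_{H}du=d(d^{*})^{f_0}du=d(\lambda_1 u)=\lambda_1\,du,
\]
so that combining the two displays gives $(\nabla^{*})^{f_0}\nabla(du)=\big(\lambda_1-\frac{1}{2\tau}\big)du$.

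Then I would pair this identity with $du$ in $L^{2}(e^{-f_0}dV_{g_0})$. Because $(\nabla^{*})^{f_0}$ is by definition the adjoint of $\nabla$ with respect to $e^{-f_0}dV_{g_0}$ and $M$ has no boundary, integration by parts yields
\[
\Big(\lambda_1-\frac{1}{2\tau}\Big)\int_M|du|^{2}_{g_0}\,e^{-f_0}dV_{g_0}=\int_M|\nabla(du)|^{2}_{g_0}\,e^{-f_0}dV_{g_0}=\int_M|\nabla^{2}u|^{2}_{g_0}\,e^{-f_0}dV_{g_0}\ge 0.
\]
As $u$ is nonconstant the left-hand integral is positive, whence $\lambda_1\ge\frac{1}{2\tau}$. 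Finally, to obtain the strict inequality I would examine the equality case: $\lambda_1=\frac{1}{2\tau}$ forces $\nabla^{2}u\equiv 0$, hence $\Delta_{g_0}u=\tr_{g_0}\nabla^{2}u=0$, so on the compact manifold $M$ the function $u$ is harmonic and therefore constant, contradicting $\lambda_1>0$. Thus $\lambda_1>\frac{1}{2\tau}$.

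I expect the main obstacle to be setting up the weighted Weitzenb\"ock identity with the correct signs (relative to the conventions \eqref{roughlaplacian}--\eqref{hodgelaplacian}) and verifying the intertwining $\Delta^{f_0}_{H}\,d=d\,\Delta^{f_0}_{g_0}$ on functions via the weighted adjoints; once these algebraic relations are in place, the genuinely geometric input is the soliton equation collapsing the Bakry--\'Emery Ricci tensor to $\frac{1}{2\tau}g_0$, after which the integration by parts and the compactness argument handling the equality case are routine.
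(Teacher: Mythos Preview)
Your proof is correct and follows precisely the approach the paper indicates: the text preceding the lemma states that one uses ``a Weitzenb\"ock-type formula relating $\Delta^{f}$ and $\Delta_{H}$ on $1$-forms\ldots\ and integration by parts,'' and the proof itself is a citation to \cite{caozhu12}, where the same Bochner argument with the Bakry--\'Emery Ricci tensor is carried out. Your handling of the equality case via $\nabla^{2}u\equiv 0\Rightarrow\Delta_{g_{0}}u=0\Rightarrow u$ constant is the standard one.
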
 

\begin{proof} See \cite{caozhu12}.
\end{proof}

In the case that $T$ is either a 1-form or a symmetric $(0,2)$ tensor , we use $\delta_{g}^{f}T$ to denote $-(\nabla^{*})^{f}T$. Suppose $h\in S^{2}(T^{*}M)$,  then

\begin{align}
\delta_{g}^{f}h&=\delta_{g} h-i_{\nabla f}h\label{divf},\\
\delta_{g}^{f}\delta_{g}^{f}&=\delta_{g}\delta_{g} h-2\langle\delta_{g} h,df\rangle_{g}-\langle h,\nabla_{g}^{2}f\rangle_{g}+\langle h,df\otimes df\rangle_{g}.\label{doubledivf}
\end{align}
where $i_{\nabla f}h$ denotes contraction of $h$ with respect to the vector field $\nabla f$. Observe that for all metrics $g$ in a regular neighborhood $\U$ of $g_{0}$ we must have
\begin{align}\label{equationP}
\Delta_{g} P(g)-\frac{1}{2}|\nabla P(g)|^{2}+\frac{1}{2}R_{g}+\frac{1}{2\tau}(P(g)-n)=\frac{1}{2\tau}\mu(g,\tau).
\end{align}
From \eqref{equationP} we have
\begin{lemma} Let $\Phi(h)=P^{'}_{g_{0}}(h)-\frac{1}{2}\tr_{g_{0}}(h)$, then $\Phi(h)$ is the unique solution of
\begin{align*}
\Delta_{g_{0}}\Phi(h)+\frac{1}{2\tau}\Phi(h)=-\frac{1}{2}\delta_{g_{0}}^{f_{0}}\delta_{g_{0}}^{f_{0}}h.
\end{align*}
In particular, if $\delta_{g_{0}}^{f_{0}}h=0$, we have $\Phi(h)\equiv 0$.
\end{lemma}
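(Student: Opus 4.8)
The plan is to derive the equation by linearizing the relation \eqref{equationP} that characterizes the minimizer $P(g)$. I would fix a path $g_{s}$ with $g_{s}|_{s=0}=g_{0}$ and $\frac{d}{ds}g_{s}|_{s=0}=h$, write $\dot f:=P'_{g_{0}}(h)$, and differentiate \eqref{equationP} at $s=0$. The right-hand side causes no trouble: since $g_{0}$ satisfies \eqref{solitoneq1} it is a critical point of $\mu$, so $\mu'_{g_{0}}(h)=0$ and the term $\frac{1}{2\tau}\mu(g,\tau)$ does not contribute. For the left-hand side I would record the three standard linearizations --- the variation of $\Delta_{g}$ applied to the frozen function $f_{0}$, the variation of $|\nabla P(g)|^{2}$, and the scalar-curvature linearization $R'_{g_{0}}(h)=-\Delta_{g_{0}}\tr_{g_{0}}h+\delta_{g_{0}}\delta_{g_{0}}h-\langle Ric(g_{0}),h\rangle$ --- and in every place where $\nabla^{2}f_{0}$ appears I would substitute $\nabla^{2}f_{0}=\frac{1}{2\tau}g_{0}-Ric(g_{0})$ from \eqref{solitoneq1}.

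Next I would separate the terms carrying $\dot f$ from the rest. The $\dot f$-terms are $\Delta_{g_{0}}\dot f-\langle\nabla f_{0},\nabla\dot f\rangle+\frac{1}{2\tau}\dot f$, which by \eqref{flaplacian} is exactly $\Delta^{f_{0}}_{g_{0}}\dot f+\frac{1}{2\tau}\dot f$; it is the drift Laplacian, rather than $\Delta_{g_{0}}$, that appears here, the correction $-\langle\nabla f_{0},\nabla\dot f\rangle$ coming from the variation of $|\nabla P(g)|^{2}$. I then substitute $\dot f=\Phi(h)+\frac{1}{2}\tr_{g_{0}}h$. The purpose of this substitution is that the pure-trace contributions cancel in pairs: the two occurrences of $\Delta_{g_{0}}\tr_{g_{0}}h$ (one from $\frac{1}{2}\Delta^{f_{0}}_{g_{0}}\tr_{g_{0}}h$, one from $-\frac{1}{2}\Delta_{g_{0}}\tr_{g_{0}}h$ inside $\frac{1}{2}R'_{g_{0}}(h)$) cancel, and so do the two terms $\langle\nabla f_{0},\nabla\tr_{g_{0}}h\rangle$. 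What is left on the right is a combination of $\delta_{g_{0}}\delta_{g_{0}}h$, $\langle\delta_{g_{0}}h,\nabla f_{0}\rangle$, $h(\nabla f_{0},\nabla f_{0})$ and $\langle h,\nabla^{2}f_{0}\rangle$, and expanding $\delta^{f_{0}}_{g_{0}}\delta^{f_{0}}_{g_{0}}h$ via \eqref{doubledivf} and \eqref{divf} (and once more using \eqref{solitoneq1}) shows that this combination is precisely $-\frac{1}{2}\delta^{f_{0}}_{g_{0}}\delta^{f_{0}}_{g_{0}}h$. This gives the asserted equation $\Delta^{f_{0}}_{g_{0}}\Phi(h)+\frac{1}{2\tau}\Phi(h)=-\frac{1}{2}\delta^{f_{0}}_{g_{0}}\delta^{f_{0}}_{g_{0}}h$.

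For the uniqueness and the final claim I would regard $\mathcal{L}:=\Delta^{f_{0}}_{g_{0}}+\frac{1}{2\tau}$ as a self-adjoint elliptic operator on $L^{2}(e^{-f_{0}}dV_{g_{0}})$. By \eqref{roughlaplacian} the operator $-\Delta^{f_{0}}_{g_{0}}=(\nabla^{*})^{f_{0}}\nabla$ is non-negative and self-adjoint for the weighted measure, so on the closed manifold $M$ it has discrete spectrum whose kernel consists of the constants (eigenvalue $0$). Any nonzero element of $\ker\mathcal{L}$ would be an eigenfunction of $-\Delta^{f_{0}}_{g_{0}}$ with eigenvalue $\frac{1}{2\tau}$; but $\frac{1}{2\tau}$ lies strictly between $0$ and the least positive eigenvalue of $-\Delta^{f_{0}}_{g_{0}}$ by Lemma \ref{eigenvalue}, hence belongs to the resolvent set. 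Therefore $\mathcal{L}$ is invertible, which yields existence and uniqueness of $\Phi(h)$. Finally, if $\delta^{f_{0}}_{g_{0}}h=0$ then $\delta^{f_{0}}_{g_{0}}\delta^{f_{0}}_{g_{0}}h=0$, so $\Phi(h)$ solves $\mathcal{L}\Phi(h)=0$ and the invertibility of $\mathcal{L}$ forces $\Phi(h)\equiv0$.

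The main obstacle is the bookkeeping in the first two steps: carrying the linearization through with the correct signs and then verifying the algebraic identity that the non-$\dot f$ remainder equals $-\frac{1}{2}\delta^{f_{0}}_{g_{0}}\delta^{f_{0}}_{g_{0}}h$. This hinges on the precise forms of $R'_{g_{0}}(h)$, of the contraction of $\frac{d}{ds}\Gamma$ against $\nabla f_{0}$, and on the expansions \eqref{divf} and \eqref{doubledivf}, so a single misplaced sign would derail the cancellations. By comparison, the spectral argument for uniqueness is short and robust, given Lemma \ref{eigenvalue}.
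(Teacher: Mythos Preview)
Your approach is essentially the same as the paper's: both linearize \eqref{equationP} at $g_{0}$, use $\mu'_{g_{0}}(h)=0$, compute the variations of $\Delta_{g}P(g)$, $|\nabla P(g)|^{2}$, $R_{g}$ and the zeroth-order term, and then invoke the soliton equation \eqref{solitoneq1} together with \eqref{flaplacian} and \eqref{doubledivf} to regroup into the asserted identity. You correctly observe that the operator appearing is the drift Laplacian $\Delta^{f_{0}}_{g_{0}}$ rather than $\Delta_{g_{0}}$ as written in the statement---the paper's own proof confirms this (its displayed conclusion reads $\Delta^{f_{0}}\Phi(h)+\frac{1}{2\tau}\Phi(h)+\frac{1}{2}\delta^{f_{0}}_{g_{0}}\delta^{f_{0}}_{g_{0}}h=0$). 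Your explicit spectral argument for uniqueness via Lemma~\ref{eigenvalue} fills in a step that the paper's proof leaves implicit.
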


\begin{proof} Note that at  $g_{0}$ we have $\mu^{'}_{g_{0}}(h)=0$ for all $h\in S^{2}(T^{*}M)$, therefore, after taking linearizations in \eqref{equationP} we obtain 
\begin{align}\label{linearizedeq}
\left(\Delta_{g}P(g)-\frac{1}{2}|\nabla P(g)|^{2}+\frac{1}{2}R_{g}+\frac{1}{2\tau}(P(g)-n)\right)^{'}_{g_{0}}(h)=0,
\end{align}
for any $h\in S^{2}(T^{*}M)$. We now compute all the terms in \eqref{linearizedeq}:  
\begin{align}
\left(\Delta_{g}P(g)\right)^{'}_{g_{0}}(h)&=\Delta_{g_{0}}P^{'}_{g_{0}}(h)-\langle h,\nabla^{2}f_{0}\rangle-\langle
\delta_{g_{0}} h, df_{0}\rangle+\frac{1}{2}\langle d\tr_{g_{0}}(h),df_{0}\rangle,\label{laplacian}\\
\left(-\frac{1}{2}|\nabla P(g)|^{2}\right)^{'}_{g_{0}}(h)&=\frac{1}{2}\langle h,df_{0}\otimes df_{0}\rangle-\langle dP^{'}_{g_{0}}(h),d f_{0}\rangle,\label{gradsquare}\\
\frac{1}{2}R^{'}_{g_{0}}(h)&=-\frac{1}{2}\Delta_{g_{0}}\tr_{g_{0}}(h)+\frac{1}{2}\delta_{g_{0}}\delta_{g_{0}} h-\frac{1}{2}\langle Ric(g_{0}),h\rangle,\label{scalar}\\
\left(\frac{1}{2\tau}(P(g)-n)\right)^{'}_{g_{0}}(h)&=\frac{1}{2\tau}P^{'}_{g_{0}}(h).\label{zerothorder}
\end{align}
In \eqref{laplacian}-\eqref{zerothorder} all the inner products $\langle\cdot,\cdot\rangle$ are with respect to $g_{0}$. Using that $g_{0}$ is a shrinking gradient Ricci soliton we have 
\begin{align}\label{ricci}
-\frac{1}{2}\langle Ric(g_{0}),h\rangle=\frac{1}{2}\langle\nabla^{2}f_{0},h \rangle-\frac{1}{4\tau}\tr_{g_{0}}(h),
\end{align}
so combining \eqref{laplacian}-\eqref{zerothorder} with \eqref{ricci} and \eqref{flaplacian},\eqref{doubledivf} we obtain
\begin{align*}
\Delta^{f_{0}}\left(P^{'}_{g_{0}}(h)-\frac{1}{2}\tr_{g_{0}}(h)\right)+\frac{1}{2\tau}\left(P^{'}_{g_{0}}(h)-\frac{1}{2}\tr_{g_{0}}(h)\right)+\frac{1}{2}\delta_{g_{0}}^{f}\delta_{g_{0}}^{f}h=0.
\end{align*}
 as claimed.
 \end{proof}
 

We now can compute the second variation of $\mu(g,\tau)$ with respect to $g$ at $g_{0}$.

\begin{lemma}\label{secondvar} The second variation of $\mu(\cdot,\tau)$ at a soliton metric $g_{0}$ as in \eqref{solitoneq1} is given by an operator $L$ defined by
\begin{align*}
Lh=-\tau\left(\Delta_{L}h-\delta_{g_{0}}^{*}\delta_{g_{0}}h+\nabla^{2}_{g_{0}}\Phi(h)+\Psi(h)+\frac{1}{2\tau}h\right),
\end{align*}
where $\Delta_{L}$ is the Lichnerowicz laplacian on $(0,2)$ tensors with respect to $g_{0}$ and $\delta^{*}_{g_{0}}$ is the dual of $\delta_{g_{0}}$ with respect to the $L^{2}$ inner product associated to $dV_{g_{0}}$. Also,  $\Phi(h)$ is as before the unique solution of
\begin{align*}
\Delta^{f_{0}}_{g_{0}}\Phi(h)+\frac{1}{2\tau}\Phi(h)=-\frac{1}{2}\delta^{f_{0}}\delta^{f_{0}}h,
\end{align*}
and $\Psi(h)$ is given in coordinates by
\begin{align}\label{psicoordinates}
\Psi(h)_{ij}=-\frac{1}{2}g^{kl}\left(\nabla_{i}h_{jl}+\nabla_{j}h_{il}-\nabla_{l}h_{ij}\right)\nabla_{l}f_{0}.
\end{align}
In particular, if $\delta_{f_{0}}h=0$, it follows that $\Phi(h)\equiv 0$. 
\end{lemma}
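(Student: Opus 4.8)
The plan is to compute the second variation of $\mu(\cdot,\tau)$ at $g_{0}$ by linearizing the first variation formula
\begin{align*}
\mu^{'}_{g}(h)=-\tau\int_{M}\langle Ric(g)+\nabla^{2}P(g)-\tfrac{1}{2\tau}g,h\rangle_{g}\,d\nu_{g},
\end{align*}
viewed as a map $g\mapsto \nabla\mu(g)=-\tau(Ric(g)+\nabla^{2}P(g)-\tfrac{1}{2\tau}g)$ into $S^{2}(T^{*}M)$. The operator $L$ is exactly the linearization of this gradient at $g_{0}$, i.e. $Lh=(\nabla\mu)^{'}_{g_{0}}(h)$. Since the computation is a pointwise tensor identity, I would work at $g_{0}$ and differentiate each of the three terms separately, using that $\mu^{'}_{g_{0}}\equiv 0$ so that all the contributions coming from differentiating the volume form $d\nu$ and the metric pairing $\langle\cdot,\cdot\rangle_{g_{0}}$ drop out (this is the standard fact that the second variation at a critical point is computed by differentiating only the $S^{2}(T^{*}M)$-valued gradient, not the measure).

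First I would recall the well-known linearization of the Ricci tensor,
\begin{align*}
Ric^{'}_{g_{0}}(h)=\tfrac{1}{2}\Delta_{L}h-\delta^{*}_{g_{0}}\delta_{g_{0}}h-\tfrac{1}{2}\nabla^{2}\tr_{g_{0}}(h),
\end{align*}
where $\Delta_{L}$ is the Lichnerowicz Laplacian; this supplies the $\Delta_{L}h$ and $-\delta^{*}_{g_{0}}\delta_{g_{0}}h$ terms (up to the factor $-\tau$). Next I would linearize the Hessian term $\nabla^{2}P(g)$. This splits into two pieces: the variation of the Hessian operator applied to the fixed function $f_{0}=P(g_{0})$, and the Hessian of the variation $\nabla^{2}P^{'}_{g_{0}}(h)$. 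The first piece contributes the Christoffel-symbol variation acting on $df_{0}$, whose coordinate expression is precisely the tensor $\Psi(h)$ displayed in \eqref{psicoordinates}. For the second piece I would invoke the preceding lemma, which identifies $\Phi(h)=P^{'}_{g_{0}}(h)-\tfrac{1}{2}\tr_{g_{0}}(h)$ as the unique solution of $(\Delta^{f_{0}}_{g_{0}}+\tfrac{1}{2\tau})\Phi(h)=-\tfrac{1}{2}\delta^{f_{0}}\delta^{f_{0}}h$; writing $\nabla^{2}P^{'}_{g_{0}}(h)=\nabla^{2}\Phi(h)+\tfrac{1}{2}\nabla^{2}\tr_{g_{0}}(h)$ lets the $\nabla^{2}\tr_{g_{0}}(h)$ from the Ricci linearization cancel against the $-\tfrac{1}{2}\nabla^{2}\tr_{g_{0}}(h)$ here, leaving the clean $\nabla^{2}_{g_{0}}\Phi(h)$ term. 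Finally, linearizing $-\tfrac{1}{2\tau}g$ gives $-\tfrac{1}{2\tau}h$, which becomes the $+\tfrac{1}{2\tau}h$ inside the parentheses after factoring out $-\tau$.

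I would then collect the four contributions, verify the cancellation of the trace-Hessian terms, and read off
\begin{align*}
Lh=-\tau\left(\Delta_{L}h-\delta_{g_{0}}^{*}\delta_{g_{0}}h+\nabla^{2}_{g_{0}}\Phi(h)+\Psi(h)+\tfrac{1}{2\tau}h\right).
\end{align*}
The final assertion that $\delta^{f_{0}}h=0$ forces $\Phi(h)\equiv 0$ is immediate from the elliptic equation characterizing $\Phi(h)$ together with the invertibility of $\Delta^{f_{0}}_{g_{0}}+\tfrac{1}{2\tau}$, which is where the eigenvalue bound of Lemma \ref{eigenvalue} (least positive eigenvalue of $-\Delta^{f_{0}}_{g_{0}}$ strictly exceeds $\tfrac{1}{2\tau}$, so the operator is invertible on the relevant space) enters. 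The main obstacle I anticipate is organizing the bookkeeping of the Hessian linearization: one must carefully track the variation of the Levi-Civita connection $(\Gamma^{k}_{ij})^{'}_{g_{0}}(h)=\tfrac{1}{2}g_{0}^{kl}(\nabla_{i}h_{jl}+\nabla_{j}h_{il}-\nabla_{l}h_{ij})$ contracted against $\nabla f_{0}$ to match $\Psi(h)$ exactly, and ensure that all terms involving $\langle\cdot,df_{0}\rangle$ and $df_{0}\otimes df_{0}$ generated along the way are correctly absorbed into $\Phi(h)$ via the $\delta^{f_{0}}\delta^{f_{0}}$ structure rather than appearing spuriously, using the soliton identity \eqref{ricci} and the $f$-twisted integration-by-parts formulas \eqref{divf}, \eqref{doubledivf}.
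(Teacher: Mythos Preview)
Your proposal is essentially the same as the paper's proof: both identify $L$ with the linearization of $\nabla\mu$ at $g_{0}$, split it as $-\tau\bigl(Ric'_{g_{0}}(h)+(\nabla^{2}P)'_{g_{0}}(h)-\tfrac{1}{2\tau}h\bigr)$, decompose $(\nabla^{2}P)'_{g_{0}}(h)=\nabla^{2}_{g_{0}}P'_{g_{0}}(h)+\Psi(h)$, and then invoke the preceding lemma on $\Phi(h)=P'_{g_{0}}(h)-\tfrac{1}{2}\tr_{g_{0}}h$ together with the standard Ricci linearization; you simply spell out the trace-Hessian cancellation that the paper leaves implicit. One bookkeeping slip to watch: your sentence ``$-\tfrac{1}{2\tau}h$ becomes $+\tfrac{1}{2\tau}h$ after factoring out $-\tau$'' is not how the sign arises---the sign in the displayed formula is fixed by the paper's conventions for $\Delta_{L}$ and $\delta^{*}_{g_{0}}$, not by factoring, so double-check those conventions when you match coefficients.
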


\begin{remark}\em{ Our computation has some similarity with the results in \cite[Theorem 1.1]{caozhu12}, where the second variation of the $\nu$-functional defined by
\begin{align*}
\nu(g)=\inf_{\int_{M} d\nu=1,\tau>0}\W(g,f,\tau),
\end{align*}
is computed (here $d\nu=(4\pi\tau)^{-n/2}e^{-f_{0}}dV_{g}$). The above cited result was first obtained by Cao, Hamilton and Ilmanen in \cite{CHI} .}
\end{remark}

\begin{proof} The second variation of $\mu(\cdot,\tau)$ at  $g_{0}$ is a bilinear form $S^{2}(T^{*}M)\times S^{2}(T^{*}M)\rightarrow\R$ which on tensors $h_{1},h_{2}\in S^{2}(T^{*}M)$ is given by
\begin{align*}
\nabla^{2}\mu_{g_{0}}(h_{1},h_{2})=\frac{d}{ds}\langle\nabla \mu(g_{s}),h_{2}\rangle_{s}\big{|}_{s=0},
\end{align*}
where $\{g_{s}\}$ is a path of metrics satisfying $g_{s}\big{|}_{s=0}=g_{0}$ and $\frac{d}{ds}g_{s}\big{|}_{s=0}=h_{1}$ and $\langle\cdot,\cdot\rangle_{s}$ is the inner product defined as
\begin{align*}
\int_{M}\langle\cdot,\cdot\rangle_{g_{s}}d\nu_{s},
\end{align*}
and $d\nu_{s}=(4\pi\tau)^{-n/2}e^{-P(g_{s})}dV_{g_{s}}$. Therefore, we can identify $\nabla^{2}\mu_{g_{0}}$ with the operator
\begin{align*}
Lh=\frac{d}{ds}\nabla\mu(g_{s})\big{|}_{s=0}.
\end{align*} 
In other words, the second variation of $\mu$ at $g_{0}$ can be identified with the linearization of $\nabla\mu(g)$ at $g_{0}$. We therefore have   
\begin{align*}
Lh=\nabla\mu^{'}_{g_{0}}(h)=-\tau\left(Ric^{'}_{g_{0}}(h)-\frac{1}{2\tau}h+\left(\nabla^{2}P\right)^{'}_{g_{0}}(h)\right),
\end{align*}
where $\left(\nabla^{2}P\right)^{'}_{g_{0}}(h)$ is the linearization of $\nabla^{2}P(g)$ at $g_{0}$ in the direction of $h$. It is easy to see that
\begin{align*}
\left(\nabla^{2}P\right)^{'}_{g_{0}}(h)=\nabla^{2}_{g_{0}}\left(P^{'}_{g_{0}}(h)\right)+\Psi(h).
\end{align*}
where $\Psi(h)$ is given in coordinates by \eqref{psicoordinates}. The rest of the claim follows from Lemma \ref{linearizedeq} and from computing explicitly the linearization of the Ricci curvature at  $g_{0}$.
\end{proof}

Note that we have mentioned several times that if $\delta^{f_{0}}_{g_{0}}h=0$ then $\Phi(h)\equiv 0$. From this observation and the results in Lemma \ref{secondvar} we conclude that under the condition $\delta^{f_{0}}_{g_{0}}h=0$, the operator $L$ becomes \emph{elliptic} because for the second order $\nabla^{2}_{g_{0}}P^{'}_{g_{0}}(h)$ and $\delta_{g_{0}}^{*}\delta h$ we have 
\begin{align*}
\nabla^{2}_{g_{0}}P^{'}_{g_{0}}(h)&=0,\\
\delta_{g_{0}}^{*}\delta_{g_{0}} h&=\delta_{g_{0}}^{*}\left(i_{\nabla f_{0}}h\right).
\end{align*}
We now prove that it is always possible to fix a gauge where $\delta_{g_{0}}^{f_{0}}h=0$ based on an idea used in \cite{ct} (see also \cite[Lemma 2.10]{viagursky11} ).  Before stating the result, we let $\mathcal{D}(M)$ denote the group of $C^{\infty}$ diffeomorphisms of $M$.

\begin{lemma}[Gauge Fixing Lemma] \label{gaugefixing} Let $f\in C^{\infty}$ be a function and let $g$ be a Riemannian metric. There exists a $C^{k,\gamma}$ neighborhood  $\U$ of $g$ and a $C^{k+1,\gamma}$ neighborhood $\V$  of the identity map in $\mathcal{D}(M)$ such that for all $\tilde{g}\in \U$ there is a unique diffeomorphism $\phi_{\tilde{g}}:M\rightarrow M$ such that 
\begin{align*}
\delta^{f}_{g}\left(\phi_{\tilde{g}}^{*}\tilde{g}-g\right)=0.
\end{align*}
Moreover, the map $\U\rightarrow\V$ given by $g\rightarrow \phi_{\tilde{g}}$ is smooth and we have the estimate
\begin{align}\label{inverseestimate}
\|\phi_{\tilde{g}}^{*}\tilde{g}-g\|_{C^{k,\gamma}}\le C\|\tilde{g}-g\|_{C^{k,\gamma}}.
\end{align}
\end{lemma}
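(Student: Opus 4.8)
The plan is to express the gauge condition as the vanishing of a nonlinear map and to solve it with the implicit function theorem. I parametrize diffeomorphisms near the identity by vector fields: for small $X\in C^{k+1,\gamma}(TM)$ let $\phi_{X}$ denote the time-one map of the flow of $X$, so that $X\mapsto\phi_{X}$ is a chart of $\mathcal{D}(M)$ near the identity. Define
\begin{align*}
F(X,\tilde{g})=\delta^{f}_{g}(\phi_{X}^{*}\tilde{g}-g),
\end{align*}
regarded as a map $C^{k+1,\gamma}(TM)\times C^{k,\gamma}(\M)\to C^{k-1,\gamma}(\Omega^{1}(M))$. It is smooth (indeed real analytic) in both variables, being assembled from the pullback operation and the fixed smooth data $g$ and $f$, and $F(0,g)=0$. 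The first step is to compute the partial derivative of $F$ in $X$ at $(0,g)$: since the derivative of $\phi_{sX}^{*}g$ at $s=0$ is $\mathcal{L}_{X}g=2\,\delta^{*}_{g}(X^{\flat})$, this equals the second order linear operator $L\omega=2\,\delta^{f}_{g}\delta^{*}_{g}\omega$ acting on $\omega=X^{\flat}\in\Omega^{1}(M)$. The whole problem reduces to inverting $L$ between the appropriate Hölder spaces.

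Next I would analyze $L$. Its principal symbol coincides with that of $2\delta_{g}\delta^{*}_{g}$ (the weight $e^{-f}$ and the zeroth order term $\iota_{\nabla f}$ appearing in \eqref{divf} do not affect it), and a direct computation gives $\sigma(L)(\xi)\omega=c(|\xi|^{2}\omega+\langle\xi,\omega\rangle_{g}\xi)$ with $c\neq 0$, a definite symbol for $\xi\neq 0$; hence $L$ is elliptic. The crucial structural fact, which is exactly the content of \eqref{divf}, is that $\delta^{f}_{g}$ is (up to sign) the formal adjoint of $\delta^{*}_{g}$ with respect to the weighted inner product $\langle\cdot,\cdot\rangle_{f}=\int_{M}\langle\cdot,\cdot\rangle_{g}\,e^{-f}dV_{g}$. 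Therefore $L$ is self-adjoint with respect to $\langle\cdot,\cdot\rangle_{f}$, with $\langle L\omega,\omega\rangle_{f}$ a fixed nonzero multiple of $\|\delta^{*}_{g}\omega\|_{f}^{2}$, so that $\ker L=\ker\delta^{*}_{g}=K$, the space of Killing one-forms of $g$, and by Fredholm theory $\mathrm{coker}\,L\cong K$.

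The presence of this kernel is the main obstacle: when $g$ admits continuous isometries, $L$ fails to be invertible on the full space. I would remove this difficulty by the standard slice argument. Because $\delta^{f}_{g}$ is the adjoint of $\delta^{*}_{g}$, its image is automatically $\langle\cdot,\cdot\rangle_{f}$-orthogonal to $K$, so $F(X,\tilde{g})\in K^{\perp}$ for all $X$ and $\tilde{g}$; it therefore suffices to solve $F=0$ with $\omega=X^{\flat}$ constrained to the slice $K^{\perp}$. On these spaces $L\colon K^{\perp}\cap C^{k+1,\gamma}\to K^{\perp}\cap C^{k-1,\gamma}$ is an isomorphism whose inverse is bounded by Schauder estimates, and the implicit function theorem produces a unique $X(\tilde{g})\in K^{\perp}$ depending smoothly on $\tilde{g}$ with $X(g)=0$, hence a unique $\phi_{\tilde{g}}=\phi_{X(\tilde{g})}$ in the slice. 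Any two solutions of the gauge equation differ by composition with an isometry of $g$ (which, for a compact gradient soliton, also preserves $f$), and the constraint $X\in K^{\perp}$ selects the canonical representative; when $K=0$ this gives genuine uniqueness in a full neighborhood $\V$ of the identity. Finally, the estimate \eqref{inverseestimate} is then formal: the map $\tilde{g}\mapsto\phi_{\tilde{g}}^{*}\tilde{g}-g$ is smooth and vanishes at $\tilde{g}=g$, so on a small enough neighborhood $\U$ its Lipschitz bound, controlled by $\|L^{-1}\|$ on $K^{\perp}$, yields $\|\phi_{\tilde{g}}^{*}\tilde{g}-g\|_{C^{k,\gamma}}\le C\,\|\tilde{g}-g\|_{C^{k,\gamma}}$.
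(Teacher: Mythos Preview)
Your proof is correct and follows essentially the same strategy as the paper: both parametrize diffeomorphisms by time-one flows of vector fields, set up the gauge condition as a nonlinear map, linearize to the weighted-self-adjoint operator $\delta^{f}_{g}\mathcal{L}_{Y}g$, identify its kernel with the Killing fields, and invoke the implicit function theorem. The only cosmetic difference is how the Killing kernel is handled---you restrict domain and target to $K^{\perp}$ directly, whereas the paper adds the orthogonal projection $\Pi(X)$ onto Killing fields to the nonlinear map to make its linearization an isomorphism and then observes a posteriori that $\Pi(X)=0$ because the image of $\delta^{f}_{g}$ is automatically orthogonal to $K$.
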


\begin{proof}
Let $\Pi:C^{k,\gamma}(M)\rightarrow C^{k,\gamma}(TM)$ denote the orthogonal projection onto the space of Killing fields for the metric $g$ with respect to the inner product
\begin{align}\label{weightip}
\int_{M}\langle\cdot,\cdot\rangle_{g} e^{-f}dV_{g}.
\end{align}
Let $X$ be a $C^{k,\gamma}$ vector field and let $\phi_{X,1}$ be the flow generated by $X$ at time 1. We consider the nonlinear map $\N:C^{k}(TM)\times C^{k}(\M(M))\rightarrow C^{k}(TM)$ given by
\begin{align*}
\N(X,\tilde{g})=\delta^{f}_{g}(\phi_{X,1}^{*}\tilde{g}-g)+\Pi(X).
\end{align*} 
Note that $\N(0,g)=0$ and the differential of $\N$ with respect to $X$ in the direction of $Y\in C^{k,\gamma}(TM)$ at $(0,g)$ is given by 
\begin{align*}
d\N_{(0,g)}(Y,\eta)=\frac{d}{dt}\left[\delta^{f}_{g}\left(\phi^{*}_{tY,1}g-g\right)+\Pi(tY)\right]\big{|}_{t=0}=\delta^{f}_{g}\mathcal{L}_{Y}g+\Pi(Y),
\end{align*}
where $\mathcal{L}_{Y}g$ is the Lie derivative operator with respect to $g$. The $L^{2}$ adjoint of $d\N_{(0,g,0)}$ with respect to the inner product \eqref{weightip}  is given by
\begin{align*}
\left(d\N_{(0,g)}\right)^{*}(Z)=\delta^{f}_{g}\mathcal{L}_{Z^{\#}}g+\Pi(Z^{\#}),
\end{align*}
where $Z^{\#}$ is the vector field dual to the 1-form $Z$ with respect to the metric $g$. Since $\Pi(Z^{\#})$ is a Killing field, it follows using integration by parts that elements in the kernel of $\left(d\N_{(0,g)}\right)^{*}$ satisfy
\begin{align}
\delta^{f}_{g}\mathcal{L}_{Z^{\#}}g&=0,\label{deltalie}\\
\Pi(Z^{\#})&=0\label{projkilling}.
\end{align}
Using integration by parts once more we see that equation \eqref{deltalie} implies that $Z^{\#}$ is a Killing field, but then equation \eqref{projkilling} shows that $Z^{\#}$ is identically zero so we conclude that the map $d\N_{(0,g)}$ is surjective. By the inverse function theorem, for $\tilde{g}$ sufficiently close to $g$ in $C^{k,\gamma}(S^{2}T^{*}M)$, we can find $X$ such that $\N(X,\tilde{g})=0$ which implies that 
\begin{align*}
\delta^{f}_{g}\left(\phi^{*}_{X,1}\tilde{g}-g\right)=-\Pi(X),
\end{align*} 
but since $\Pi(X)$ is a Killing field, we can argue as above to conclude that
\begin{align*}
\delta^{f}_{g}\left(\phi^{*}_{X,1}\tilde{g}-g\right)&=0,\\
\Pi(X)&=0.
\end{align*} 
The estimate \eqref{inverseestimate} follows easily from the inverse function theorem.

\end{proof}
Consider now the space $S^{2}_{*}\subset S^{2}$ given by
\begin{align*}
S^{2}_{*}=\{h\in S^{2}(T^{*}M):\delta^{f_{0}}_{g_{0}}h=0\}.
\end{align*}
Recall that we can identify the tangent space $T_{g_{0}}\M(M)$  with $S^{2}(T^{*}M)$. Lemma \ref{gaugefixing} implies that  there exists a small $C^{k,\gamma}$ neighborhood $\U$ of $0$  in $T_{g_{0}}\M(M)$ such that we can define a diffeomorphism $E_{g_{0}}:S^{2}_{*}\cap \U\rightarrow\V$ where $\V$ is a regular neighborhood of $g_{0}$. Instead of writing the map $E_{g_{0}}$ we write its inverse $E^{-1}_{g_{0}}:\U\rightarrow T_{g_{0}}\M\cap S^{2}_{*}$ which is given by 
\begin{align*}
E^{-1}_{g_{0}}(g)=\phi^{*}_{g}g-g_{0},
\end{align*}
where $\phi_{g}$ is the diffeomorphism obtained in Lemma \ref{gaugefixing}. A simple computation shows that the differential of $E_{g_{0}}$ at $0\in T_{g_{0}}\M$ is the identity map. Also, it is not difficult to prove that $E_{g_{0}}$ is a real analytic map. We now introduce a functional $G:S^{2}_{*}\cap\U\rightarrow\R$ given by
\begin{align*}
G(h)=\mu(E_{g_{0}}(h),\tau).
\end{align*}  
We can now apply the results in \cite[Section 6]{coldmini2012} to prove the following

\begin{lemma}\label{lojaG} There exists $0<\alpha<1$ such that 
\begin{align*}
|G(h)-G(0)|^{1-\frac{\alpha}{2}}\le C\|\nabla G(h)\|_{L^{2}_{h}(M)},
\end{align*}
for all $h\in S^{2}_{*}\cap\U$ if $\U$ is a sufficiently small $C^{k+1,\gamma}$ neighborhood of $0$ in $T_{g_{0}}\M(M)$. Here $L_{h}^{2}(M)$ is  the weighted $L^{2}$ space given by the measure $e^{-f_{h}}dV_{E_{g_{0}}(h)}$ and $f_{h}$ is the unique function satisfying $G(h)=\mu(E_{g_{0}}(h),\tau)=\W(E_{g_{0}}(h),f_{h},\tau)$. 
\end{lemma}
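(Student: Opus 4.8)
The plan is to verify that the functional $G = \mu \circ E_{g_{0}}$ satisfies the hypotheses of the abstract {\L}ojasiewicz-Simon inequality proved in \cite[Section 6]{coldmini2012}, whose three essential ingredients are: real-analyticity of $G$ near its critical point $0 \in S^{2}_{*}\cap\U$, the fact that $0$ is genuinely a critical point, and that the Hessian $\nabla^{2}G(0)$ is a self-adjoint elliptic (hence Fredholm) operator on the slice $S^{2}_{*}$. The gauge-fixing construction via $S^{2}_{*} = \{\delta^{f_{0}}_{g_{0}} h = 0\}$ from Lemma \ref{gaugefixing} is precisely designed to secure this last ellipticity, which fails on the full space $S^{2}(T^{*}M)$ because of the diffeomorphism invariance of $\mu$.

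For analyticity, I would argue as follows. The map $E_{g_{0}}$ is real analytic, as already noted; the assignment $g \mapsto P(g)$ depends real-analytically on $g$ throughout the regular neighborhood by Definition \ref{regularmetric}, condition \eqref{analytic}, and Lemma \ref{lemregular}; and $\mu(g,\tau) = \W(g,P(g),\tau)$ is obtained by integrating against the weight $e^{-P(g)}$ an expression that is a real-analytic function of $g$, $\nabla g$, $P(g)$ and $\nabla P(g)$. Composing these real-analytic maps, in the Banach-space sense of Remark \ref{meaninganalytic}, shows that $G$ is real analytic in a neighborhood of $0$.

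For the gradient and the Hessian, since $g_{0}$ is a critical point of $\mu$, i.e. $\nabla\mu(g_{0}) = 0$, and the differential of $E_{g_{0}}$ at $0$ is the identity on $S^{2}_{*}$, the chain rule gives $\nabla G(0) = 0$ and identifies $\nabla^{2}G(0)$ with the restriction to $S^{2}_{*}$ of the operator $L$ from Lemma \ref{secondvar}. On $S^{2}_{*}$ one has $\delta^{f_{0}}_{g_{0}} h = 0$, whence $\Phi(h) \equiv 0$ and $\delta^{*}_{g_{0}}\delta_{g_{0}} h = \delta^{*}_{g_{0}}(i_{\nabla f_{0}} h)$ is of first order, so the second-order part of $L|_{S^{2}_{*}}$ is $-\tau\Delta_{L}$; thus $L|_{S^{2}_{*}}$ is a self-adjoint, second-order elliptic operator, in particular Fredholm with finite-dimensional kernel on the relevant H\"{o}lder and Sobolev spaces. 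This is exactly the Fredholm-Hessian hypothesis required by \cite[Section 6]{coldmini2012}.

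With analyticity and the self-adjoint elliptic Hessian in hand, the abstract {\L}ojasiewicz-Simon theorem of \cite{coldmini2012} --- which is formulated precisely to handle functionals depending on second derivatives of the metric, unlike the first-order integrands in \eqref{generalfunctional}--\eqref{infiniteloja} --- yields the stated inequality, the norm $\|\nabla G(h)\|_{L^{2}_{h}}$ being the weighted one, which by Remark \ref{equivalent} may be replaced by an equivalent fixed-weight norm. The main obstacle I expect is not the analyticity but the careful verification that $L|_{S^{2}_{*}}$ meets the precise abstract hypotheses of \cite{coldmini2012}: that the gauge-fixing genuinely removes the infinitesimal diffeomorphism directions $\mathcal{L}_{X}g_{0}$ so that no non-elliptic degeneracy survives on the slice, and that the mapping properties between the Banach spaces, with the two-derivative loss inherent in the functional, match those demanded by the abstract theorem.
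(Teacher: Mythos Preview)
Your proposal is correct and follows exactly the route the paper takes: the paper does not give a self-contained argument for Lemma \ref{lojaG} but simply states that it follows by applying the abstract {\L}ojasiewicz--Simon theorem of \cite[Section 6]{coldmini2012}, after the preceding computations (Lemma \ref{secondvar} and the remarks after it) have established that $L$ restricted to $S^{2}_{*}$ is elliptic and after Lemma \ref{gaugefixing} has provided the slice. Your write-up is in fact more detailed than the paper's, spelling out the verification of analyticity, the identification $\nabla^{2}G(0)=L|_{S^{2}_{*}}$ via the chain rule at a critical point, and the Fredholm property; the concerns you flag at the end (that $L|_{S^{2}_{*}}$ genuinely meets the Banach-space hypotheses of \cite{coldmini2012}) are real but are precisely what that reference is designed to absorb.
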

As shown also in \cite[Section 6]{coldmini2012}, Lemma \ref{lemmaloja} is a corollary of Lemma \ref{lojaG}.

\bibliography{entropyII}

\def\cprime{$'$}
\providecommand{\bysame}{\leavevmode\hbox to3em{\hrulefill}\thinspace}
\providecommand{\MR}{\relax\ifhmode\unskip\space\fi MR }
\providecommand{\MRhref}[2]{%
  \href{http://www.ams.org/mathscinet-getitem?mr=#1}{#2}
}
\providecommand{\href}[2]{#2}
\begin{thebibliography}{HDHI04}

\bibitem[CK04]{chowknopf}
Bennett Chow and Dan Knopf, \emph{The {R}icci flow: an introduction},
  Mathematical Surveys and Monographs, vol. 110, American Mathematical Society,
  Providence, RI, 2004. \MR{2061425 (2005e:53101)}

\bibitem[CM12]{coldmini2012}
Tobias~H. Colding and William~P. Minicozzi, \emph{On uniqueness of tangent
  cones for {E}instein manifolds}, arXiv:1206.4929, 2012.

\bibitem[CT94]{ct}
Jeff Cheeger and Gang Tian, \emph{On the cone structure at infinity of {R}icci
  flat manifolds with {E}uclidean volume growth and quadratic curvature decay},
  Invent. Math. \textbf{118} (1994), no.~3, 493--571.

\bibitem[CZ12]{caozhu12}
Huai-Dong Cao and Meng Zhu, \emph{On second variation of {P}erelman's {R}icci
  shrinker entropy}, Math. Ann. \textbf{353} (2012), no.~3, 747--763.
  \MR{2923948}

\bibitem[GV11]{viagursky11}
Matthew Gursky and Jeff Viaclovsky, \emph{Rigidity and stability of {E}instein
  metrics for quadraric curvature functionals}, arXiv:1105.4648, 2011.

\bibitem[Ham82]{ham82}
Richard~S. Hamilton, \emph{Three-manifolds with positive {R}icci curvature}, J.
  Differential Geom. \textbf{17} (1982), no.~2, 255--306. \MR{664497
  (84a:53050)}

\bibitem[Has12]{hasl2012}
Robert Haslhofer, \emph{Perelman's lambda-functional and the stability of
  {R}icci-flat metrics}, Calc. Var. Partial Differential Equations \textbf{45}
  (2012), no.~3-4, 481--504. \MR{2984143}

\bibitem[HDHI04]{CHI}
Cao Huai-Dong, Richard~S. Hamilton, and Tom Ilmanen, \emph{Gaussian densities
  and stability for some {R}icci solitons}, arXiv:math/0404165, 2004.

\bibitem[HM13]{muhas2013}
Robert Haslhofer and Reto M\"{u}ller, \emph{Dynamical stability and instability
  of {R}icci-flat metrics}, arXiv:1301.3219, 2013.

\bibitem[HN12]{heinnaber}
Hans-Joachim Hein and Aaron Naber, \emph{New logarithmic sobolev inequalities
  and an $\epsilon$-regularity theorem for the ricci flow}, arXiv:1205.0380
  ({T}o appear in {CPAM}), 2012.

\bibitem[KL06]{kleinerlott}
Bruce Kleiner and John. Lott, \emph{Notes on {P}erelman's papers},
  arXiv:math/0605667, 2006.

\bibitem[Nir01]{nirenbergtopics}
Louis Nirenberg, \emph{Topics in nonlinear functional analysis}, Courant
  Lecture Notes in Mathematics, vol.~6, New York University Courant Institute
  of Mathematical Sciences, New York, 2001, Chapter 6 by E. Zehnder, Notes by
  R. A. Artino, Revised reprint of the 1974 original. \MR{1850453
  (2002j:47085)}

\bibitem[Pal12a]{npalisrf}
Nefton Pali, \emph{The soliton {R}icci flow on compact manifolds},
  arXiv:1203.3682, 2012.

\bibitem[Pal12b]{npaliwvar}
\bysame, \emph{The total second variation of {P}erelman's
  $\mathcal{W}$-functional}, arXiv:1201.0969, 2012.

\bibitem[Per02]{perelman1}
Grisha Perelman, \emph{The entropy formula for the {R}icci {F}low and its
  {G}eometric {A}pplications}, arXiv:math/0211159, 2002.

\bibitem[Pet06]{petersenbk}
Peter Petersen, \emph{Riemannian geometry}, second ed., Graduate Texts in
  Mathematics, vol. 171, Springer, New York, 2006. \MR{2243772 (2007a:53001)}

\bibitem[Rot81]{rothaus}
O.~S. Rothaus, \emph{Logarithmic {S}obolev inequalities and the spectrum of
  {S}chr\"odinger operators}, J. Funct. Anal. \textbf{42} (1981), no.~1,
  110--120. \MR{620582 (83f:58080b)}

\bibitem[Sch11]{schulze11}
Felix Schulze, \emph{Uniqueness of compact tangent flows in {M}ean {C}urvature
  {F}low}, arXiv:1107.4643, 2011.

\bibitem[Ses06]{sesum06}
Natasa Sesum, \emph{Convergence of the {R}icci flow toward a soliton}, Comm.
  Anal. Geom. \textbf{14} (2006), no.~2, 283--343. \MR{2255013 (2007f:53080)}

\bibitem[Sim83]{ls83}
Leon Simon, \emph{Asymptotics for a class of nonlinear evolution equations,
  with applications to geometric problems}, Ann. of Math. (2) \textbf{118}
  (1983), no.~3, 525--571. \MR{727703 (85b:58121)}

\bibitem[Sim96]{lsimonbk2}
\bysame, \emph{Theorems on regularity and singularity of energy minimizing
  maps}, Lectures in Mathematics ETH Z\"urich, Birkh\"auser Verlag, Basel,
  1996, Based on lecture notes by Norbert Hungerb{\"u}hler. \MR{1399562
  (98c:58042)}

\bibitem[SW12]{songwang10}
Song Sun and Yuanqi Wang, \emph{On the {K}\"{a}hler-{R}icci flow near a
  {K}\"{a}hler-{E}instein metric}, arXiv:1004.2018 ({T}o appear in {J}. reine
  angew. {M}ath.), 2012.

\end{thebibliography}
\end{document}